\numberwithin{equation}{section}
\newcommand{\heis}{\ensuremath{\mathbf{H}}}
\newcommand{\surf}[1][g]{\ensuremath{\Sigma_{#1}}}
\newcommand{\hbdy}[1][g]{\ensuremath{\mathrm{H}_{#1}}}
\newcommand{\esurf}[1][g]{\ensuremath{\mathbf{\Sigma}_{#1}}}
\newcommand{\surfcyl}[1][g]{\ensuremath{\surf[#1]\times [0,1]}}
\newcommand{\mcg}[1]{\ensuremath{\mbox{MCG}(#1)}}
\newcommand{\emcg}[1]{\ensuremath{\widetilde{\mbox{MCG}}(#1)}}
\newcommand{\thetafn}{\ensuremath{\mathbf{\Theta}_N^\Pi}}
\newcommand{\skn}[1]{\ensuremath{< \! #1 \! >}}
\newcommand{\lskein}[1]{\ensuremath{\mathcal{L}(#1)}}
\newcommand{\rlskein}[1]{\ensuremath{{\mathcal{L}}_N(#1)}}
\newcommand{\mapcyl}[1]{\ensuremath{\mathrm{I}_{#1}}}
\newcommand{\anomaly}{\ensuremath{a}}
\newcommand{\innprod}{\ensuremath{\langle -,- \rangle}}
\newcommand{\dilim}[2]{\ensuremath{\varinjlim_{#1} #2}}
\DeclareMathOperator{\id}{id}
\DeclareMathOperator{\Dim}{Dim}
\DeclareMathOperator{\op}{Op}
\DeclareMathOperator{\Aut}{Aut}
\DeclareMathOperator{\End}{End}
\DeclareMathOperator{\Int}{Int}
\newtheorem{theorem}{Theorem}[section]
\newtheorem{proposition}[theorem]{Proposition}
\theoremstyle{definition}
\newtheorem{definition}[theorem]{Definition}
\theoremstyle{remark}
\newtheorem{remark}[theorem]{Remark}
\begin{document}

\title{The topological quantum field theory of Riemann's theta functions}
\author{R{\u{a}}zvan Gelca}
\address{Department of Mathematics and Statistics, Texas Tech University, Lubbock, TX 79409-1042. USA.} \email{rgelca@gmail.com}
\thanks{Research of the first author partially supported by the NSF, award No. DMS 0604694}
\author{Alastair Hamilton}
\address{Department of Mathematics and Statistics, Texas Tech University, Lubbock, TX 79409-1042. USA.} \email{alastair.hamilton@ttu.edu}
\thanks{Research of the second author partially supported by  the Simons Foundation, award No. 279462}
\subjclass[2010]{11F27, 14H81, 14K25, 53D50, 55N22, 57M25, 57M27, 57R56, 57R65, 81S10, 81T45}
\keywords{Theta functions, topological quantum field theory, low-dimensional topology, modular groups, skein theory, quantization, Heisenberg group.}

\begin{abstract}
In this paper we prove the existence and uniqueness of a topological quantum field theory that incorporates, for all Riemann surfaces, the corresponding spaces of theta functions and the actions of the Heisenberg groups and modular groups on them.
\end{abstract}

\maketitle

\section{Introduction}

In his treatise ``\emph{Theorie del Abel'schen Funktionen}'' \cite{riemann}, Bernard Riemann associated Riemann surfaces to  elliptic functions, and to these Riemann surfaces he associated theta functions defined on $n$-tori which we now call Jacobian varieties. Riemann's work, inspired by the previous works of Abel and Jacobi, generalized their ideas. Theta functions were originally the building blocks of the theory of elliptic functions, and later established themselves as some of the most important functions in mathematics, comparable to trigonometric functions and polynomials. A major contribution of 19th century research was the discovery of the action of mapping class groups of surfaces (also known as modular groups) on theta functions, whose discovery is mostly due to Jacobi. Much later, Andr\'e Weil  discovered an action of a Heisenberg group on theta functions given by translations in the variables \cite{weil}.

At the end of the 20th century, Riemann's theta functions were placed in a quantum physical framework. On the one hand, Edward Witten related them to the abelian version of his quantum field theory based on the Chern-Simons functional. On the other hand; theta functions, the action of the Heisenberg group, and the action of the modular group were obtained from the geometric quantization of the Jacobian variety, cf. \cite{manin1}.

This paper is about the theory of  Riemann's theta functions and its place within Witten's abelian Chern-Simons theory. It is a continuation of the work of the first author with Alejandro Uribe in \cite{thetaTQFT}, as well as of the work of Murakami, Ohtsuki, and Okada in \cite{moo}. The main result of this paper is the construction of the topological quantum field theory (TQFT) that underlies the theory of Riemann's theta functions and is hence associated to abelian Chern-Simons theory. This TQFT also encompasses the Murakami-Ohtsuki-Okada invariant of 3-manifolds.

The construction is done in the perspective of \cite{bhmv} using skein modules associated to the linking number \cite{przytycki2}. Our perspective is novel in the sense that we view this topological quantum field theory as an object that incorporates
 the spaces of theta functions, the actions of the
Heisenberg groups and modular groups for surfaces of all genera.
Stated more precisely, this TQFT is constructed so that it incorporates the skein theoretical realizations of these
three objects. Moreover, we prove the \emph{uniqueness} of our description of such a theory, which means that there is only one theory that incorporates the classical theory of theta functions, namely abelian Chern-Simons theory. From this viewpoint, Chern-Simons theory arises naturally from the theory of theta functions. In fact, as is well-known, many of the classical constructions in the theory of theta functions can be done from the perspective of  quantization. As we will see, these principles of  quantization  largely fix what the corresponding TQFT must look like. We hope that this point of view may be useful in studying TQFTs based on other gauge groups.

It is well-known that modular tensor categories give rise to TQFTs \cite{turaev}, although it is an open question as to whether all TQFTs arise in this way. One can pose the question as to whether or not the TQFT defined in this paper arises from some modular category? Unfortunately, there are some issues with constructing such a modular category. Even if such a modular category exists, it would be impossible to embed this tensor category inside the tensor category of vector spaces, at least when the latter is given the usual associator for the tensor product. In some sense, any such category must look somewhat pathological. We briefly discuss some of the issues that arise at the end of the paper, following the proof of the main theorem.

The paper is organized as follows. In Section \ref{sec_thetaskein} we recall relevant material on theta functions, skein theory and the action of the mapping class group on theta functions. In Section \ref{sec_resolve} we resolve the projective ambiguity in the representation of the mapping class group using the Maslov index. In Section \ref{sec_tqft} we define the background for our TQFT and prove the main theorem on the existence and uniqueness of a TQFT incorporating the theory of theta functions. Here we construct a certain category of extended surfaces, where composition of morphisms is defined through the Maslov index. This modest innovation reduces keeping track of the anomaly of a TQFT and related matters to a series of category theoretical tautologies.

\subsection*{Notation and conventions}

Given an oriented surface $\Sigma$, we will denote its mapping class group by $\mcg{\Sigma}$. We denote the group of integers modulo $N$ by $\mathbb{Z}_N$.

\section{Theta functions and skein theory} \label{sec_thetaskein}

In this section we will recall basic background material on theta functions and quantization, translating the relevant notions into skein theory.

\subsection{Theta functions and quantization}

Let $\surf$ denote the standard surface of genus $g$, as depicted in Figure~\ref{fig_surface}, \emph{which we fix for the remainder of the paper}. Figure~\ref{fig_surface} illustrates a canonical basis for $H_1(\surf,\mathbb{Z})$, which we also fix. $\surf$ is oriented so that the intersection form in $H_1(\surf,\mathbb{Z})$ is given by,
\begin{eqnarray*}
a_i \cdot a_j = 0 = b_i \cdot b_j, \quad
a_i \cdot b_j = \delta_{ij}.
\end{eqnarray*}

\begin{figure}[ht]
\centering
\resizebox{.40\textwidth}{!}{\includegraphics{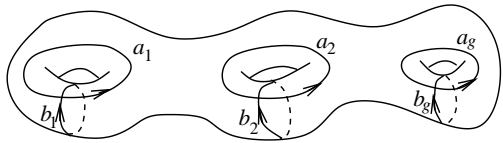}}
\caption{The standard surface of genus $g$ together with a canonical basis of its first homology group.}
\label{fig_surface}
\end{figure}

Endow $\surf$ with a complex structure to turn it into a Riemann surface. We now briefly sketch the standard construction of the space of theta functions using this complex structure via geometric quantization. Later, we will see that this construction does not really depend upon this choice of complex structure by describing a topological model for the space of theta functions in terms of skein theory; however, it is this complex structure that allows us to make use of geometric quantization.

Recall from \cite{farkaskra} that given the complex structure on $\surf$, there exists a unique set of holomorphic 1-forms
$ \zeta_1,\ldots,\zeta_g $
on $\surf$ such that
\[ \int_{a_i}\zeta_j = \delta_{ij}. \]
From these 1-forms we  define the matrix $\Pi\in M_g(\mathbb{C})$ by
\[ \pi_{ij}:=\int_{b_i}\zeta_j. \]
This matrix is symmetric with positive definite imaginary part. The $g\times 2g$ matrix $\lambda :=(I_g,\Pi)$ is called the period matrix.

\begin{definition}
The Jacobian variety $\mathcal{J}$ associated to our Riemann surface is the quotient
\[ \mathcal{J}:=\mathbb{C}^g/L \]
of $\mathbb{C}^g$ by the lattice subgroup
\[ L:=\{ t_1\lambda_1+\cdots+t_{2g}\lambda_{2g}: t_1,\ldots,t_{2g}\in\mathbb{Z} \} \] spanned by the columns $\lambda_i$ of $\lambda$. It inherits a holomorphic structure from its covering space $\mathbb{C}^g$.
\end{definition}

The period matrix $\lambda$ defines an invertible $\mathbb{R}$-linear map
\begin{eqnarray*}
\mathbb{R}^{2g}=\mathbb{R}^g\times\mathbb{R}^g  \to  \mathbb{C}^g \quad
(x,y)  \mapsto  \lambda (x, y)^T = x + \Pi y
\end{eqnarray*}
descending to a diffeomorphism
\[ \mathbb{R}^{2g}/\mathbb{Z}^{2g}\approx\mathbb{C}^g/L=\mathcal{J}. \]
In this system of real coordinates we may unambiguously define a symplectic form
\[ \omega:=\sum_{i=1}^g dx_i\wedge dy_i \]
on $\mathcal{J}$. This symplectic form allows us to view $\mathcal{J}$ as the phase space of a classical mechanical system.

The Hilbert space of our quantum theory is obtained by applying the technique of geometric quantization (cf. \cite{sniatycki}, \cite{woodhouse}) to this classical system using a K\"ahler polarization, which we will describe here very briefly. One begins by constructing a certain holomorphic line bundle over the Jacobian variety known as the prequantization line bundle. This bundle is constructed as the tensor product of a line bundle that possesses a connection whose curvature is equal to $-\frac{2\pi i}{h}\omega$ and a line bundle that is the square root of
the canonical line bundle. In our particular case the latter is trivial, so we are only concerned with the first.
Because Chern classes are integral classes, this forces Planck's constant $h$ to be the reciprocal of an integer $N$. In fact, we insist that this is a positive \emph{even} integer, in order for the entire mapping class group to act on theta functions. We then consider sections of this line bundle. In the K\"ahler polarization, our Hilbert space is defined to be the space of holomorphic sections of this line bundle. Pulling this line bundle back to the contractible space $\mathbb{C}^g$, it becomes trivial, and the Hilbert space may be identified with holomorphic functions on $\mathbb{C}^g$ satisfying certain periodicity conditions. In a canonical trivialization that is determined by the construction of this bundle, we get the space of theta functions, which is defined as follows.

\begin{definition}
The space of classical theta functions $\thetafn(\surf)$ associated to our Riemann surface consists of all holomorphic functions $f:\mathbb{C}^g\to\mathbb{C}$ satisfying the periodicity conditions
\begin{displaymath}
f(z+\lambda_j) = f(z), \quad
f(z+\lambda_{g+j}) = e^{-2\pi iNz_j-\pi i N\pi_{jj}}f(z);
\end{displaymath}
for $j=1,\ldots, n$; where $\lambda_j$ denotes the $j$th column of the period matrix $\lambda$. It may be given the structure of a Hilbert space by defining on it the following Hermitian inner product,
\begin{equation} \label{eqn_thetaform}
\langle\langle f,g \rangle\rangle := (2N)^{\frac{g}{2}} (\det\Pi_{\mathrm{I}})^{\frac{1}{2}} \int_{[0,1]^{2g}} f(x,y)\overline{g(x,y)} e^{-2\pi N y^T \Pi_{\mathrm{I}} y} dx dy;
\end{equation}
where $\Pi_\mathrm{I}\in M_g(\mathbb{R})$ denotes the imaginary part of $\Pi$.
\end{definition}

The space of theta functions, as defined above, has a canonical orthonormal basis consisting of the \emph{theta series}
\[ \theta_\mu^{\Pi}(z):=\sum_{n\in {\mathbb Z}^g}e^{2\pi i N\left[ \frac{1}{2}\left(\frac{\mu}{N}+n\right)^T \Pi \left(\frac{\mu}{N}+n\right)+\left(\frac{\mu}{N}+n\right)^T z \right]}, \quad \mu\in \mathbb{Z}_N^g.\]

Now we turn to the problem of quantization, for which we use Weyl quantization in the momentum representation \cite{andersen}, \cite{thetaTQFT}. In this scheme, only the exponential functions
\begin{eqnarray*}
\mathcal{J}  \to  \mathbb{C}, \quad
(x,y) \mapsto  e^{2\pi i (p^T x + q^T y)}
\end{eqnarray*}
on the phase space $\mathcal{J}$ are quantized, where $p,q\in\mathbb{Z}^g$. To such a function on the phase space, one defines an operator
\begin{equation} \label{eqn_quantop}
O_{pq}:= \op\left(e^{2\pi i (p^T x + q^T y)}\right):\thetafn(\surf) \to \thetafn(\surf)
\end{equation}
on the Hilbert Space $\thetafn(\surf)$ by exponentiating the action of the usual position and momentum operators. On theta series it is given by (cf. Proposition 2.1 of \cite{thetaTQFT})
\[ O_{pq}[\theta_\mu^\Pi] = e^{-\frac{\pi i}{N}p^Tq-\frac{2\pi i}{N}\mu^Tq}\theta_{\mu+p}^\Pi. \]

\subsection{Heisenberg group}

\begin{definition}
Let $g$ be a nonnegative integer. The \emph{Heisenberg group} $\heis(\mathbb{Z}^g)$ is the group
\[ \heis(\mathbb{Z}^g):=\{(p,q,k): \; p,q\in\mathbb{Z}^g, k\in \mathbb{Z}\} \]
with underlying multiplication
\[ (p,q,k)(p',q',k')=\left(p+p',q+q',k+k'+\sum_{j=1}^g(p_jq_j'-p_j'q_j)\right).\]
If $N$ is a positive \emph{even} integer, the \emph{finite Heisenberg group} $\heis(\mathbb{Z}_N^g)$ is the quotient of the group $\heis(\mathbb{Z}^g)$ by the normal subgroup consisting of all elements of the form
\[ (p,q,2k)^N=(Np,Nq,2Nk); \quad p,q\in\mathbb{Z}^g, k\in\mathbb{Z}. \]
\end{definition}

\begin{remark} \label{rem_heisenberghomology}
The Heisenberg group $\heis(\mathbb{Z}^g)$ can be interpreted as the $\mathbb{Z}$-extension of
\begin{displaymath}
\begin{array}{ccc}
H_1(\surf,\mathbb{Z}) & = & \mathbb{Z}^g \times \mathbb{Z}^g, \\
\sum_{i=1}^g (p_i a_i + q_i b_i) & \leftrightharpoons & (p,q);
\end{array}
\end{displaymath}
by the cocycle defined by the intersection form.
\end{remark}

The operators \eqref{eqn_quantop} defined by Weyl quantization generate a subgroup of the group $U(\thetafn(\surf))$ of unitary operators on $\thetafn(\surf)$, which may be identified with the finite Heisenberg group as follows.

\begin{proposition} (Proposition 2.3 in \cite{thetaTQFT})
The subgroup $G$ of the group of unitary operators on $\thetafn(\Sigma_g)$ that is generated by all operators of the form
\[ O_{pq}=\op\left(e^{2\pi i (p^T x + q^T y)}\right);\quad p,q\in\mathbb{Z}^g; \]
is isomorphic to the finite Heisenberg group $\heis(\mathbb{Z}_N^g)$:
\begin{equation} \label{eqn_schrorep}
\heis(\mathbb{Z}_N^g) \cong  G, \quad
(p,q,k)  \mapsto  e^{\frac{k\pi i}{N}} O_{pq}.
\end{equation}
\end{proposition}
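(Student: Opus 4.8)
The statement is established by a short computation, which I would organize into three steps. First I would work out how the operators $O_{pq}$ compose, using the displayed action of $O_{pq}$ on the orthonormal basis $\{\theta_\mu^\Pi\}_{\mu\in\mathbb{Z}_N^g}$. Applying first $O_{p'q'}$ and then $O_{pq}$ to $\theta_\mu^\Pi$, shifting the summation index, and collecting the phase factors yields
\[ O_{pq}\circ O_{p'q'}=e^{\frac{\pi i}{N}\left(p^Tq'-p'^Tq\right)}\,O_{p+p',\,q+q'} \]
as operators on $\thetafn(\surf)$. The antisymmetric expression $p^Tq'-p'^Tq$ in the exponent arises precisely from the cross term $-\frac{2\pi i}{N}(p')^Tq$ that appears when the index in $O_{pq}[\theta_{\mu+p'}^\Pi]$ is moved from $\mu$ to $\mu+p'$. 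This is the only real calculation in the argument and it is routine.

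Next I would define $\phi\colon\heis(\mathbb{Z}^g)\to U(\thetafn(\surf))$ by $\phi(p,q,k):=e^{\frac{k\pi i}{N}}O_{pq}$ and verify directly, from the composition law above together with the multiplication rule $(p,q,k)(p',q',k')=\bigl(p+p',q+q',k+k'+\sum_j(p_jq_j'-p_j'q_j)\bigr)$, that $\phi$ is a group homomorphism: both $\phi(p,q,k)\phi(p',q',k')$ and $\phi\bigl((p,q,k)(p',q',k')\bigr)$ equal $e^{\frac{\pi i}{N}(k+k'+p^Tq'-p'^Tq)}O_{p+p',q+q'}$. Since $\phi(p,q,0)=O_{pq}$ and the operators $O_{pq}$ generate $G$ by definition, $\phi$ maps onto $G$.

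It then remains to identify $\ker\phi$. Because the $\theta_\mu^\Pi$ form an orthonormal basis, depend only on $\mu$ modulo $N$, and $O_{pq}$ carries $\theta_\mu^\Pi$ to a unit scalar multiple of $\theta_{\mu+p}^\Pi$, the operator $e^{\frac{k\pi i}{N}}O_{pq}$ can be a scalar only when $p\in N\mathbb{Z}^g$; in that case $O_{pq}[\theta_\mu^\Pi]=e^{-\frac{\pi i}{N}p^Tq}e^{-\frac{2\pi i}{N}\mu^Tq}\theta_\mu^\Pi$, and requiring this phase to be independent of $\mu$ forces $q\in N\mathbb{Z}^g$ as well. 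Writing $p=Np_0$, $q=Nq_0$, the residual scalar is $e^{-\pi i Np_0^Tq_0}=1$ because $N$ is \emph{even}, so $O_{pq}=\id$ and the condition $\phi(p,q,k)=\id$ reduces to $e^{\frac{k\pi i}{N}}=1$, i.e. $k\in 2N\mathbb{Z}$. Hence $\ker\phi=\{(Np,Nq,2Nk):p,q\in\mathbb{Z}^g,\ k\in\mathbb{Z}\}$, which, using $(p,q,2k)^N=(Np,Nq,2Nk)$ and again the evenness of $N$ (which is what makes this set closed under the group operation), is exactly the normal subgroup by which $\heis(\mathbb{Z}^g)$ is divided to form $\heis(\mathbb{Z}_N^g)$. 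Therefore $\phi$ descends to the isomorphism $\heis(\mathbb{Z}_N^g)\cong G$ in the statement. I expect the evenness of $N$ in this last step to be the only genuine subtlety: it is what renders the residual scalar trivial so that $\ker\phi$ coincides precisely with the prescribed normal subgroup — for odd $N$ the kernel would be strictly smaller and the conclusion would fail — while everything else is bookkeeping with phase factors.
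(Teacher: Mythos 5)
Your argument is correct and is the standard direct verification; the paper itself does not prove this statement but imports it from Proposition 2.3 of the cited Gelca--Uribe paper, where essentially this computation is carried out. The only point worth tightening is surjectivity: to conclude that the image of $\phi$ equals $G$ (rather than merely contains it) you should note that the scalar $e^{\frac{\pi i}{N}}\id = O_{e_1,0}\,O_{0,e_1}\,O_{e_1,e_1}^{-1}$ already lies in $G$ by your composition law, after which everything else --- including your identification of the kernel and the role of the evenness of $N$ --- is right.
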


The representation of the finite Heisenberg group on the space of theta functions that is defined by \eqref{eqn_schrorep} is called the \emph{Schr\"odinger representation}. It is essentially unique, as expressed by the following well-known finite dimensional version of the Stone-von Neumann theorem (see \cite{thetaTQFT}).

\begin{theorem} \label{thm_stone}
Any irreducible representation of the finite Heisenberg group in which the element $(0,0,1)\in\heis(\mathbb{Z}_N^g)$ acts as multiplication by the scalar $e^{\frac{\pi i}{N}}$ is unitarily equivalent to the Schr\"odinger representation \eqref{eqn_schrorep}.
\end{theorem}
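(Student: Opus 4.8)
The plan is to run the classical argument behind the Stone--von Neumann theorem, adapted to the finite group $\heis(\mathbb{Z}_N^g)$. Since this group is finite, every representation is unitarizable, so we may assume we are given a unitary irreducible representation $\rho\colon\heis(\mathbb{Z}_N^g)\to U(V)$ on a finite-dimensional Hilbert space $V$ with $\rho(0,0,1)=e^{\pi i/N}\cdot\id$. First I would record the commutation relations among the images $X_p:=\rho(p,0,0)$, $Y_q:=\rho(0,q,0)$, $Z:=\rho(0,0,1)$, all read directly off the multiplication in $\heis(\mathbb{Z}^g)$ and the defining quotient: $X_pX_{p'}=X_{p+p'}$, $Y_qY_{q'}=Y_{q+q'}$, $X_p^N=Y_q^N=\id$, and $X_pY_q=e^{2\pi i\,p^Tq/N}Y_qX_p$. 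I would also note that the center of $\heis(\mathbb{Z}_N^g)$ equals $\langle(0,0,1)\rangle\cong\mathbb{Z}_{2N}$, so by Schur's lemma the hypothesis on $(0,0,1)$ pins down the central character completely.

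Next I would decompose $V$ into joint eigenspaces $V=\bigoplus_{\chi}V_\chi$ for the abelian subgroup $\{Y_q\}_{q\in\mathbb{Z}_N^g}$, indexed by characters $\chi\in\widehat{\mathbb{Z}_N^g}$. The relation $X_pY_q=e^{2\pi i\,p^Tq/N}Y_qX_p$ shows that each $X_p$ carries $V_\chi$ isomorphically onto $V_{\chi+\hat p}$, where $p\mapsto\hat p$ is the isomorphism $\mathbb{Z}_N^g\xrightarrow{\sim}\widehat{\mathbb{Z}_N^g}$ coming from the perfect pairing $(p,q)\mapsto e^{2\pi i\,p^Tq/N}$. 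Hence the operators $X_p$ act simply transitively on the set of weight spaces; in particular all $V_\chi$ are nonzero of a common dimension $d$, so $\dim V=N^gd$. Picking a nonzero $v\in V_{\chi_0}$, the span of $\{X_p v:p\in\mathbb{Z}_N^g\}$ is already $\rho$-invariant (each $Y_q$ and $Z$ act on $X_pv$ by a scalar, and the $X_{p'}$ permute these vectors) and has dimension at most $N^g$; irreducibility forces it to equal $V$, whence $d=1$ and $\dim V=N^g$, with every weight space a line.

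Finally I would construct the intertwiner with the Schr\"odinger representation. From the formula $O_{pq}[\theta_\mu^\Pi]=e^{-\frac{\pi i}{N}p^Tq-\frac{2\pi i}{N}\mu^Tq}\theta_{\mu+p}^\Pi$ quoted above, the theta series $\{\theta_\mu^\Pi\}_{\mu\in\mathbb{Z}_N^g}$ are exactly the $Y_q$-weight vectors of $\thetafn(\surf)$, forming an orthonormal basis of its (one-dimensional) weight spaces, with $O_{p0}$ the shift $\theta_\mu^\Pi\mapsto\theta_{\mu+p}^\Pi$ up to a phase. So I would choose a unit weight vector $v_0\in V$ lying in the weight space matching $\theta_0^\Pi$, set $v_\mu:=(\text{phase})\cdot X_\mu v_0$ with the phase chosen so that $\rho(p,q,0)v_\mu$ obeys the same formula as $O_{pq}[\theta_\mu^\Pi]$, and let $U\colon\thetafn(\surf)\to V$ send $\theta_\mu^\Pi\mapsto v_\mu$. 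By construction $U$ is an equivariant unitary isomorphism. I expect the only genuine work to be bookkeeping: checking that all commutation phases really lie in the group of $N$-th (resp. $2N$-th) roots of unity --- this is where the structure of the finite Heisenberg group, and the evenness of $N$, enter --- and pinning down the phases in the definition of $v_\mu$ so that the match with the Schr\"odinger action is exact rather than merely projective.
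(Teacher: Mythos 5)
The paper does not actually prove this statement: it is quoted as a known result, with the proof deferred to the cited reference \cite{thetaTQFT}, so there is no in-paper argument to compare against. Your proposal is the standard proof of the finite Stone--von Neumann theorem and it is correct. The key computations all check out: in the quotient defining $\heis(\mathbb{Z}_N^g)$ one has $X_p^N=Y_q^N=\id$ and $X_pY_q=\rho(p,q,p^Tq)$, $Y_qX_p=\rho(p,q,-p^Tq)$, whence $X_pY_q=Z^{2p^Tq}Y_qX_p=e^{2\pi i\,p^Tq/N}Y_qX_p$; the pairing $(p,q)\mapsto e^{2\pi i\,p^Tq/N}$ is perfect, so the weight spaces for $\{Y_q\}$ are permuted simply transitively and the cyclic-vector argument forces $\dim V=N^g$ with one-dimensional weight spaces. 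One small simplification you could make at the end: no phase correction is needed in defining $v_\mu:=X_\mu v_0$ with $v_0$ in the trivial $Y$-weight space, since $\rho(p,q,0)=X_pY_qZ^{-p^Tq}$ gives $\rho(p,q,0)v_\mu=e^{-\frac{\pi i}{N}p^Tq-\frac{2\pi i}{N}\mu^Tq}v_{\mu+p}$, which already matches the displayed formula for $O_{pq}[\theta_\mu^\Pi]$ on the nose (note $O_{p0}\theta_\mu^\Pi=\theta_{\mu+p}^\Pi$ with phase exactly $1$). The only point worth making explicit is that $v_\mu$ is well defined for $\mu\in\mathbb{Z}_N^g$ because $(Ne_j,0,0)$ lies in the subgroup being quotiented out; the evenness of $N$ is what makes that subgroup (and hence the finite Heisenberg group) well defined in the first place.
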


The Stone-von Neumann theorem allows us to quantize changes of coordinates, giving rise to a projective representation of the mapping class group $\mcg{\surf}$ of our surface on $\thetafn(\surf)$. Any element $h\in\mcg{\surf}$  induces a linear endomorphism $h_*$ of $H_1(\surf,\mathbb{Z})$. By Remark \ref{rem_heisenberghomology}, this in turn induces an automorphism
\begin{eqnarray*}
\heis(\mathbb{Z}_N^g)  \to  \heis(\mathbb{Z}_N^g),\quad
((p,q),k)  \mapsto  (h_*(p,q),k)
\end{eqnarray*}
of the finite Heisenberg group. By Theorem \ref{thm_stone}, there is a unitary map
\[ \rho(h):\thetafn(\surf)\to\thetafn(\surf) \]
such that
\begin{equation} \label{eqn_egorov}
O_{h_*(p,q)}\circ\rho(h) =\rho(h)\circ O_{pq};\quad p,q\in\mathbb{Z}^g.
\end{equation}
This identity is referred to as the \emph{exact Egorov identity}. By Schur's Lemma, the map  $\rho(h)$ is well-defined up to multiplication by a complex scalar of unit modulus. Consequently, this construction gives rise to a projective unitary representation
\begin{equation} \label{eqn_hermitejacobiact}
\mcg{\surf}  \to  U\left(\thetafn(\Sigma_g)\right)/U(1), \quad
h  \mapsto  \rho(h);
\end{equation}
of the mapping class group on the space of theta functions. There is a well-known representation theoretical point of view which identifies the operators associated to elements of the mapping class group as discrete Fourier transforms, cf. \cite{thetaTQFT}.

\begin{remark}
The classical way of explicating the representation of the mapping class
group, which was done in the 19th century well before Weil's
discovery of the action of the Heisenberg group, is to
examine the behavior of theta functions under the action
of the mapping class group on the variables of theta functions.
Under changes of coordinates, up to a multiplication by a holomorphic function, the space
of theta functions is mapped linearly to itself (see for example \cite{mum}). Because changes of coordinates act on the
symbols of the operators that make up the Heisenberg group in
exactly the same manner as the one described above, the two representations of
the mapping class group coincide up to multiplication by a
scalar. The above point of view allows us to establish the
analogy with the metaplectic representation.  However,
in what follows we will give a topological perspective of the representation of the mapping class group, and in this setting
we  outline a topological approach to resolving the
projective ambiguity of the representation, different from
the classical one.
\end{remark}

\subsection{Skein theory} \label{sec_skeinthy}

We  now recall from \cite{thetaTQFT} a topological model for the space of theta functions and of the actions of the Heisenberg group and the mapping class group in terms of skein theory. Given a compact oriented 3-manifold $M$, consider the free $\mathbb{C}[t,t^{-1}]$-module with basis  the set of isotopy classes of framed oriented links in the interior of the manifold, including the empty link $\emptyset$. Factor this by the $\mathbb{C}[t,t^{-1}]$-submodule generated by the relations from Figure~\ref{fig_skeinrelations}, where the two terms depict framed links that are identical, except in an embedded ball, in which they appear as shown and are endowed with the blackboard framing; the orientation in the embedded ball shown in Figure \ref{fig_skeinrelations} that is induced by the orientation of $M$  coincides with the canonical orientation of $\mathbb{R}^3$.

\begin{figure}[ht]
\centering
\includegraphics{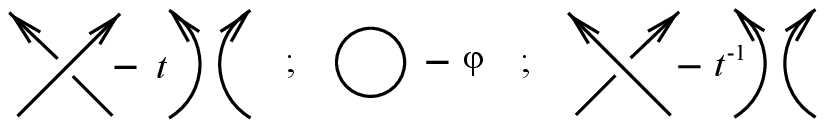}
\caption{The skein relations defining $\lskein{M}$.}
\label{fig_skeinrelations}
\end{figure}

The result of this factorization is called the \emph{linking number skein module} of $M$ and is denoted by $\lskein{M}$. We refer to its elements as skeins. Given a link $L$ in $M$, we denote its skein by $\skn{L}\in\lskein{M}$. From the positive even integer $N$, we define the \emph{reduced linking number skein module} of $M$, denoted by $\rlskein{M}$, as the $\mathbb{C}[t,t^{-1}]$-module obtained from $\lskein{M}$ as the quotient by the following relations
\[ t\cdot L= e^{\frac{\pi i}{N}}\cdot L \quad \text{and} \quad \gamma^{\| N}\cup L=L \]
where the first relation holds for all links $L$, and the second relation holds for all oriented framed simple closed curves $\gamma$ and links $L$ disjoint from $\gamma$, and $\gamma^{\| N}$ is the multicurve in a regular neighborhood of $\gamma$ disjoint from $L$ obtained by replacing $\gamma$ by $N$ parallel copies of it (where ``parallel'' is defined using the framing of $\gamma$).

If $M=\surfcyl$, then the identification
\[ (\surfcyl)\bigsqcup_{\surf\times\{0\}  =\surf\times\{1\} }(\surfcyl) \approx \surfcyl \]
induces a multiplication of skeins in $\rlskein{\surfcyl}$ which transforms it into an algebra. Note that a $\mathbb{C}$-linear basis for $\rlskein{\surfcyl}$ is given by
\[ a_1^{p_1}\cdots a_g^{p_g}b_1^{q_1}\cdots b_g^{q_g}; \qquad p,q\in\mathbb{Z}_N^g. \]
where $a_i$ and $b_i$ are the simple curves depicted in Figure \ref{fig_surface} (cf. Theorem 4.7(a) of \cite{thetaTQFT}).

Furthermore, for any manifold $M$, the identification
\begin{equation} \label{eqn_collarid}
(\partial M\times [0,1])\bigsqcup_{ \partial M\times\{0\}  =\partial M } M\approx M
\end{equation}
defined canonically up to isotopy makes $\rlskein{M}$ into a $\rlskein{\partial M\times [0,1]}$-module.

\begin{theorem} (Theorem 4.7(c) in \cite{thetaTQFT})\label{thm_opskein}
There is an isomorphism of algebras
\begin{equation} \label{eqn_opskein}
\rlskein{\surfcyl}  \cong  \End(\thetafn(\Sigma_g)), \quad
a_1^{p_1}\cdots a_g^{p_g}b_1^{q_1}\cdots b_g^{q_g}  \mapsto  e^{\frac{\pi i}{N}p^Tq}O_{pq}.
\end{equation}
This isomorphism is equivariant with respect to the action of the mapping class group $\mcg{\surf}$, which acts on $\rlskein{\surfcyl}$ in the obvious fashion and on $\End(\thetafn(\Sigma_g))$ through conjugation by the  action of the mapping class group;
\begin{displaymath}
\mcg{\surf}\times\End(\thetafn(\Sigma_g))  \to  \End(\thetafn(\Sigma_g)), \quad
(h,A)  \mapsto  \rho(h)\circ A\circ\rho(h)^{-1}.
\end{displaymath}
\end{theorem}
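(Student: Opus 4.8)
The plan is to establish the algebra isomorphism first and then deduce the mapping class group equivariance from it. For the isomorphism \eqref{eqn_opskein}, I would proceed as follows. First, recall that Weyl quantization gives the operators $O_{pq}$ satisfying the commutation relation $O_{pq}O_{p'q'} = e^{\frac{\pi i}{N}(p^Tq' - p'^Tq)}O_{p+p',q+q'}$, which one reads off from the formula $O_{pq}[\theta_\mu^\Pi] = e^{-\frac{\pi i}{N}p^Tq - \frac{2\pi i}{N}\mu^Tq}\theta_{\mu+p}^\Pi$ stated in the excerpt. On the skein side, one computes the product $a_i^{p_i}b_i^{q_i}$ in $\rlskein{\surfcyl}$ using the skein relation from Figure~\ref{fig_skeinrelations}: crossing changes are resolved at the cost of a factor $t = e^{\frac{\pi i}{N}}$, so the product of the curves $a_i$ and $b_i$ picks up precisely the power of $t$ dictated by the linking number, while the relation $\gamma^{\|N}\cup L = L$ enforces that exponents live in $\mathbb{Z}_N$. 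The upshot is that $\rlskein{\surfcyl}$, with its stated basis $a_1^{p_1}\cdots a_g^{p_g}b_1^{q_1}\cdots b_g^{q_g}$, is an algebra on $N^{2g}$ basis elements whose structure constants match those of the algebra generated by the rescaled operators $e^{\frac{\pi i}{N}p^Tq}O_{pq}$. Since $\thetafn(\surf)$ has dimension $N^g$ (the theta series $\theta_\mu^\Pi$ for $\mu\in\mathbb{Z}_N^g$), $\End(\thetafn(\surf))$ has dimension $N^{2g}$, so it suffices to check that the assignment is a well-defined algebra homomorphism that is surjective (equivalently, that the $O_{pq}$ span $\End(\thetafn(\surf))$, which follows from the irreducibility of the Schrödinger representation in Theorem~\ref{thm_stone} together with a dimension count, or directly from the fact that the $O_{pq}$ form a basis of $\End$). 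Checking that the map respects multiplication reduces to comparing the two structure-constant computations above; the rescaling factor $e^{\frac{\pi i}{N}p^Tq}$ is chosen exactly so that the cocycle on the skein side, which is the symmetric "half" of the intersection form, matches the Heisenberg cocycle on the operator side.

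For the equivariance statement, the mapping class group $\mcg{\surf}$ acts on $\rlskein{\surfcyl}$ by pushing links forward through the homeomorphism $h\times\id_{[0,1]}$, and on $\End(\thetafn(\surf))$ by $A\mapsto \rho(h)\circ A\circ\rho(h)^{-1}$. I would verify that the isomorphism \eqref{eqn_opskein} intertwines these two actions by checking it on the algebra generators $a_i, b_i$. On the skein side, $h$ sends the curve representing $(p,q)\in H_1(\surf,\mathbb{Z})$ to a curve representing $h_*(p,q)$, up to a framing correction that contributes a power of $t$; this power of $t$ is again governed by the intersection form and matches the cocycle discrepancy. On the operator side, the exact Egorov identity \eqref{eqn_egorov}, $O_{h_*(p,q)}\circ\rho(h) = \rho(h)\circ O_{pq}$, says precisely that conjugation by $\rho(h)$ sends $O_{pq}$ to $O_{h_*(p,q)}$. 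Combining these with the rescaling factor $e^{\frac{\pi i}{N}p^Tq}$ and tracking how that factor transforms under $h_*$ (which preserves the intersection form, hence its symmetric half up to the controlled framing term) gives the intertwining on generators, and since both actions are by algebra automorphisms this extends to all of $\rlskein{\surfcyl}$.

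The main obstacle I anticipate is the careful bookkeeping of framings and the associated powers of $t = e^{\frac{\pi i}{N}}$: the map is only an honest (non-projective) isomorphism because of the specific rescaling $e^{\frac{\pi i}{N}p^Tq}O_{pq}$, and verifying that this exact factor makes both the multiplicativity and the equivariance come out without residual scalars requires one to match the symmetric bilinear form appearing in the skein computation (from resolving crossings and framing changes) with the Heisenberg cocycle and its behavior under $h_*$. In particular, one must confirm that the framing anomaly picked up when pushing a curve forward by $h$ is exactly compensated by the change in the rescaling factor, so that no projective ambiguity survives at the level of $\End$ even though $\rho(h)$ itself is only defined up to a scalar (the scalar cancels in the conjugation action). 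Since all the essential computations are either stated in the excerpt or are the standard linking-number skein calculations from \cite{thetaTQFT}, the proof is largely a matter of assembling these pieces; accordingly, in \cite{thetaTQFT} this is Theorem 4.7(c), and here we may simply cite it.

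\noproof
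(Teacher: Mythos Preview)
Your proposal is correct and matches the paper's treatment: the paper does not prove this statement at all but simply records it as Theorem 4.7(c) of \cite{thetaTQFT}, and you likewise conclude by citing that reference. The sketch you provide---matching structure constants via the skein relations and the Heisenberg commutation law, then invoking the dimension count and the exact Egorov identity \eqref{eqn_egorov} for equivariance---is the natural argument and is consistent with what is done in \cite{thetaTQFT}, so there is nothing to correct.
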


We have the following  topological description of the space of theta functions. Let $\hbdy$ denote the standard genus $g$ handlebody, which we define as the 3-manifold enclosed by the surface $\surf$ from Figure~\ref{fig_surface}. \emph{$\hbdy$ will remain fixed for the remainder of the paper}. Consider the framed curves $a_i$ in Figure~\ref{fig_surface}. These give rise to framed curves lying in $\hbdy$, which we also denote by $a_i$.

\begin{theorem} (Theorem 4.7(b) in \cite{thetaTQFT}) \label{thm_thetaskein}
There is a $\mathbb{C}$-linear isomorphism
\begin{equation} \label{eqn_thetaskein}
\thetafn(\Sigma_g)  \cong  \rlskein{\hbdy}, \quad
\theta_{\mu}^{\Pi}  \mapsto  a_1^{\mu_1}\cdots a_g^{\mu_g}.
\end{equation}
This isomorphism is equivariant in the sense that the following diagram commutes:
\begin{displaymath}
\xymatrix{ \End(\thetafn(\Sigma_g))\otimes\thetafn(\Sigma_g) \ar[r] \ar[d]^{\cong} & \thetafn(\Sigma_g) \ar[d]^{\cong} \\ \rlskein{\surfcyl}\otimes\rlskein{\hbdy} \ar[r] & \rlskein{\hbdy} }
\end{displaymath}
Here the top horizontal arrow denotes the usual action of $\End(\thetafn(\Sigma_g))$, whilst the bottom arrow denotes the action arising from the identification \eqref{eqn_collarid}. The right vertical arrow denotes the isomorphism \eqref{eqn_thetaskein}, whilst the left vertical arrow denotes the tensor product of the inverse of \eqref{eqn_opskein} with \eqref{eqn_thetaskein}.
\end{theorem}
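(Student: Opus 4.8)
The plan is to construct the $\mathbb{C}$-linear map \eqref{eqn_thetaskein} directly on the distinguished bases and verify it is a bijection, then check equivariance of the diagram by comparing the two actions on these bases. First I would recall from \cite{thetaTQFT} (or re-derive) the explicit $\mathbb{C}$-linear basis of $\rlskein{\hbdy}$ given by the multicurves $a_1^{\mu_1}\cdots a_g^{\mu_g}$ with $\mu \in \mathbb{Z}_N^g$; this follows by isotoping any framed link in the handlebody into a standard position concentrated in a neighborhood of the cores $a_i$, using the skein relations of Figure~\ref{fig_skeinrelations} to eliminate crossings and linking with the single remaining multiplicity parameter per handle, and the reduction relation $\gamma^{\|N}\cup L = L$ together with $t = e^{\pi i/N}$ to cut the parameters down to $\mathbb{Z}_N$. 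Since $\{\theta_\mu^\Pi : \mu \in \mathbb{Z}_N^g\}$ is by definition an orthonormal basis of $\thetafn(\Sigma_g)$, the assignment $\theta_\mu^\Pi \mapsto a_1^{\mu_1}\cdots a_g^{\mu_g}$ extends uniquely to a $\mathbb{C}$-linear isomorphism; the only content here is that both sides genuinely have bases indexed by $\mathbb{Z}_N^g$, which is exactly Theorem 4.7(a)--(b) of \cite{thetaTQFT}.

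Next I would establish commutativity of the equivariance square. Chase the basis element $(e^{\frac{\pi i}{N}p^Tq}O_{pq}) \otimes \theta_\mu^\Pi$ around both ways. Along the top-then-right route, using the formula $O_{pq}[\theta_\mu^\Pi] = e^{-\frac{\pi i}{N}p^Tq - \frac{2\pi i}{N}\mu^Tq}\theta_{\mu+p}^\Pi$ recalled in the excerpt, one lands on $e^{-\frac{2\pi i}{N}\mu^Tq}\, a_1^{\mu_1+p_1}\cdots a_g^{\mu_g+p_g}$ under \eqref{eqn_thetaskein}. Along the left-then-bottom route, the element maps to $(a_1^{p_1}\cdots a_g^{p_g}b_1^{q_1}\cdots b_g^{q_g}) \otimes (a_1^{\mu_1}\cdots a_g^{\mu_g})$ in $\rlskein{\surfcyl}\otimes\rlskein{\hbdy}$, and one must compute the action coming from the collar identification \eqref{eqn_collarid}: glue the cylinder $\surf\times[0,1]$ onto the handlebody and simplify the resulting skein in $\hbdy$. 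The curves $a_i^{p_i}$ in the cylinder merge with the $a_i^{\mu_i}$ already present (raising the exponent additively), while each $b_j^{q_j}$ in the cylinder, once pushed into the handlebody, bounds a disk meeting the $a$-curves; resolving the crossings of $b_j$ with the $a_i$'s via the skein relation produces exactly a factor of $e^{-\frac{2\pi i}{N}\mu^Tq}$ (the linking-number contribution, $q_j$ copies of $b_j$ each linking $\mu_j$ copies of $a_j$, scaled by $t^2 = e^{2\pi i/N}$ with the appropriate sign) and then $b_j$, being null-homotopic in $\hbdy$, disappears up to a power of $t$ that the normalization is designed to absorb. Matching the two scalars and the two resulting multicurves gives commutativity.

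The main obstacle I anticipate is bookkeeping the normalization constants and signs in the collar action so that they agree precisely with the $e^{-\frac{2\pi i}{N}\mu^Tq}$ predicted by the Egorov/quantization formula — in particular tracking how the conventional factor $e^{\frac{\pi i}{N}p^Tq}$ inserted in \eqref{eqn_opskein} interacts with the framing contributions when $b$-curves slide off the $a$-handles, and confirming there is no leftover global phase. A clean way to sidestep the worst of this is to note that $\rlskein{\hbdy}$ is already a module over $\rlskein{\surfcyl} \cong \End(\thetafn(\Sigma_g))$ via \eqref{eqn_collarid} and \eqref{eqn_opskein}, and that $\rlskein{\hbdy}$ is generated as such a module by the empty skein $a_1^0\cdots a_g^0 = \skn{\emptyset}$, which corresponds to $\theta_0^\Pi$; so it suffices to check the square commutes on elements of the form $A \otimes \theta_0^\Pi$, i.e. to verify that $O_{pq}[\theta_0^\Pi] = e^{-\frac{\pi i}{N}p^Tq}\theta_p^\Pi$ corresponds under \eqref{eqn_thetaskein} to the skein obtained by gluing $a^p b^q$ onto $\skn{\emptyset}$, which is the single unknotted multicurve $a_1^{p_1}\cdots a_g^{p_g}$ (the $b$-curves bound disks in the empty handlebody and contribute only the normalizing phase). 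Equivariance on all of $\End(\thetafn(\Sigma_g))\otimes\thetafn(\Sigma_g)$ then follows formally from associativity of the module structure together with Theorem \ref{thm_opskein}. This reduces the whole proof to the single base-point computation plus the already-cited basis statements from \cite{thetaTQFT}.
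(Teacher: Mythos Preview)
The paper does not prove this theorem; it is quoted from \cite{thetaTQFT} (Theorem~4.7(b)) and stated here without argument. Your proposal is therefore a reconstruction rather than something to compare against, and its outline is correct: both sides carry bases indexed by $\mathbb{Z}_N^g$, the map \eqref{eqn_thetaskein} is the obvious bijection of bases, and equivariance is a module-compatibility check.

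Your reduction to the cyclic generator $\theta_0^\Pi\leftrightarrow\skn{\emptyset}$ is valid. One sharpening removes the phase bookkeeping you flag as the main obstacle. By Theorem~\ref{thm_opskein} the algebra $\rlskein{\surfcyl}$ is a full $N^g\times N^g$ matrix algebra; since $\dim\rlskein{\hbdy}=N^g$ it is the unique simple module, so there exists a (unique) module isomorphism $\tilde\Phi:\thetafn(\surf)\to\rlskein{\hbdy}$ with $\tilde\Phi(\theta_0^\Pi)=\skn{\emptyset}$. Now $\theta_\mu^\Pi=O_{\mu,0}\,\theta_0^\Pi$ and, under \eqref{eqn_opskein}, $O_{\mu,0}$ corresponds to $a^\mu$; hence $\tilde\Phi(\theta_\mu^\Pi)=a^\mu\cdot\skn{\emptyset}=a^\mu$, the one skein computation required, and it involves no $b$-curves and no crossings. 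Thus $\tilde\Phi$ coincides with \eqref{eqn_thetaskein} and equivariance follows without ever evaluating $(a^p b^q)\cdot\skn{\emptyset}$ for $q\neq 0$. Your direct route also works, but then the assertion that ``the $b$-curves bound disks in the empty handlebody and contribute only the normalizing phase'' genuinely has to be checked against the stacking convention used to define the basis element $a^p b^q$ in $\rlskein{\surfcyl}$: those $b$-curves do link the $a^p$-curves coming from the same cylinder once pushed into $\hbdy$, and the required identity is $(a^p b^q)\cdot\skn{\emptyset}=a^p$ with \emph{no} residual phase.
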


Having given a description of the space of classical theta functions that is independent of the complex structure, we now turn to describing the action of the mapping class group in this context. Recall that any element $h\in\mcg{\surf}$ may be represented as surgery on a framed link in $\surfcyl$. Take a framed link $L$ in $\surfcyl$ such that the 3-manifold $M_h$ that is obtained from $\surfcyl$ by surgery along $L$ is homeomorphic to $\surfcyl$ by a homeomorphism (where $M_h$ is the domain and $\surfcyl$ is the codomain) that is the identity on $\surf\times\{1\}$ and $h\in\mcg{\surf}$ on $\surf\times\{0\}$. For those links $L$ having this property, the homeomorphism $h_L:=h$ is well-defined up to isotopy.

To describe the skein in $\rlskein{\surfcyl}$ that is associated to the action \eqref{eqn_hermitejacobiact} of the mapping class group by Theorem~\ref{thm_opskein}, we introduce a special skein.

\begin{definition}
Consider the skein in the solid torus that is depicted in Figure \ref{fig_omega} multiplied by $N^{-\frac{1}{2}}$.
\begin{figure}[ht]
\centering
\scalebox{.30}{\includegraphics{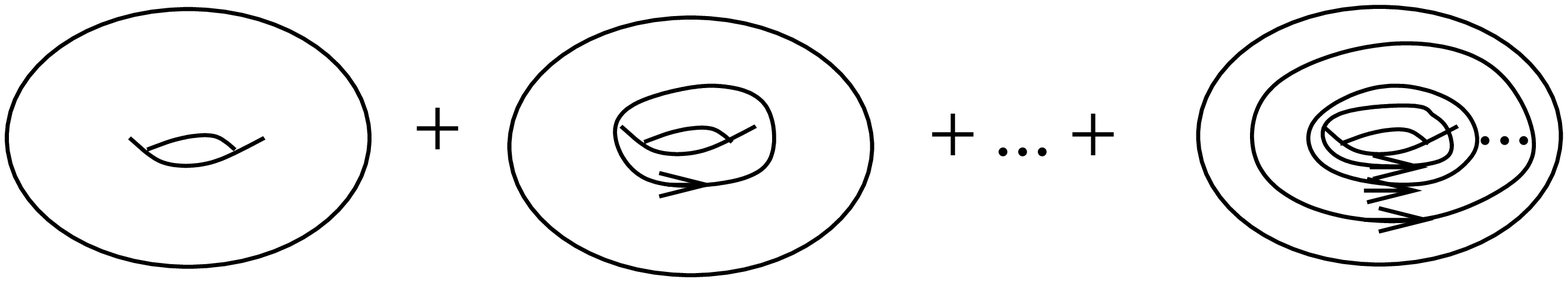}}
\caption{The skein $\Omega$. The sum has $N$ terms.}
\label{fig_omega}
\end{figure}
Given a framed link $L$ in a 3-manifold $M$, let $\Omega(L)\in\rlskein{M}$ denote the skein that is obtained from $L$ by replacing each component of $L$ with the skein from Figure~\ref{fig_omega} using the framing of that component. Specifically, if $L$ is the disjoint union $L=L_1\cup\cdots\cup L_m$ of closed framed curves $L_i$, then
\[ \Omega(L):=N^{-\frac{m}{2}}\sum_{i_1,\ldots,i_m=0}^{N-1} L_1^{\| i_1}\cup L_2^{\| i_2}\cup\cdots\cup L_m^{\| i_m}. \]
Note that $\Omega(L)$ does not depend upon the orientation of the link components of $L$.
\end{definition}

\begin{theorem} (Theorem 8.1 in \cite{thetaTQFT}) \label{thm_mcgskein}
Let $h_L\in\mcg{\surf}$ be a homeomorphism that is represented by surgery on a framed link $L$. The skein in $\rlskein{\surfcyl}/U(1)$ associated to the endomorphism $\rho(h_L)\in U(\thetafn(\Sigma_g))/U(1)$ by Theorem \ref{thm_opskein} is $\Omega(L)$.
\end{theorem}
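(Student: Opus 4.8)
The plan is to transport $\Omega(L)$ through the algebra isomorphism \eqref{eqn_opskein} of Theorem~\ref{thm_opskein} to an operator $\hat{T}\in\End(\thetafn(\surf))$ and to show that $\hat{T}$ equals $\rho(h_L)$ up to a scalar of unit modulus. The one nontrivial fact I would establish is the following purely skein-theoretic statement, which is the conjugation form of the exact Egorov identity \eqref{eqn_egorov} realized in skein theory: \emph{for every framed link $L_0$ in $\surfcyl$ representing a mapping class $h_{L_0}\in\mcg{\surf}$ and every skein $b\in\rlskein{\surfcyl}$,}
\begin{displaymath}
\Omega(L_0)\cdot b \;=\; (h_{L_0})_{*}(b)\cdot\Omega(L_0),
\end{displaymath}
\emph{where $(h_{L_0})_{*}$ is the action on $\rlskein{\surfcyl}$ induced by $h_{L_0}\times\id$ and $\cdot$ is the stacking product.} Granting this with $L_0=L$: applying the isomorphism \eqref{eqn_opskein}, which by Theorem~\ref{thm_opskein} intertwines $(h_L)_{*}$ with conjugation by $\rho(h_L)$, the identity becomes $\hat{T}\circ x=\rho(h_L)\circ x\circ\rho(h_L)^{-1}\circ\hat{T}$ for every $x\in\End(\thetafn(\surf))$, so that $\rho(h_L)^{-1}\circ\hat{T}$ is central. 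Since $\End(\thetafn(\surf))$ is a full matrix algebra (the Schr\"odinger representation being irreducible), this forces $\hat{T}=c\,\rho(h_L)$ for some $c\in\mathbb{C}$, and it remains only to check that $c\neq 0$ and $|c|=1$.

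To prove the displayed skein identity, the essential ingredient is the \emph{sliding property} of $\Omega$: if $K$ is a framed knot in a $3$-manifold $M$ and two skeins in $M$ differ by sliding strands across $K$ (band-summing with a parallel copy of the core of $K$), then they coincide in $\rlskein{M}$ after adjoining $\Omega(K)$ on $K$; this follows from the reduced relations $t\cdot L=e^{\pi i/N}L$ and $\gamma^{\|N}\cup L=L$ defining $\rlskein{M}$. Given this, it suffices to verify the identity for $b=\skn{\gamma}$ with $\gamma$ a simple closed curve on $\surf$ pushed into the cylinder at a fixed level, since such skeins generate $\rlskein{\surfcyl}$ multiplicatively. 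Arrange $L_0\subset\surf\times(1/3,2/3)$. In the surgered manifold $(\surfcyl)_{L_0}$, identified with $\surfcyl$ by a homeomorphism that is the identity near $\surf\times\{1\}$ and $h_{L_0}$ near $\surf\times\{0\}$, the curve $h_{L_0}^{-1}(\gamma)$ at a low level and the curve $\gamma$ at a high level have the same image, hence are isotopic; pulling this isotopy back across the surgery realizes it as an isotopy together with handle slides over the components of $L_0$ in $\surfcyl$. The sliding property then gives $\Omega(L_0)\cdot\skn{h_{L_0}^{-1}(\gamma)}=\skn{\gamma}\cdot\Omega(L_0)$, which is the displayed identity after substituting $h_{L_0}(\gamma)$ for $\gamma$ and using multiplicativity in $b$.

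The facts $c\neq 0$ and $|c|=1$ I would both extract from Gauss sum evaluations. Choosing a framed link $L'$ representing $h_L^{-1}$, the skein $\Omega(L)\cdot\Omega(L')=\Omega(L\cup L')$ is central by the displayed identity (as $L\cup L'$ represents $\id$), hence a scalar; and this scalar is nonzero, since $L\cup L'$ is carried to the empty link by handle slides and (de)stabilizations, under which $\Omega$ is respectively unchanged and multiplied by the factor $N^{-\frac12}\sum_{n=0}^{N-1}e^{\pm\pi i n^2/N}$ --- nonzero precisely because $N$ is even. In particular $\Omega(L)\neq 0$, so $c\neq 0$. Finally, under \eqref{eqn_opskein} the adjoint of $\hat{T}$ with respect to the form \eqref{eqn_thetaform} corresponds to the mirror image $\Omega(\bar L)$ of $\Omega(L)$, where $\bar L$ represents $h_L^{-1}$; hence $\hat{T}^{*}\circ\hat{T}=|c|^{2}\,\id$ corresponds to $\Omega(\bar L\cup L)$, which --- $\bar L\cup L$ being reducible to the empty link through handle slides and destabilizations --- is a product of the quantities $N^{-\frac12}\sum_{n=0}^{N-1}e^{\pm\pi i n^2/N}$, each of modulus $1$ since the quadratic Gauss sum has modulus $N^{1/2}$ when $N$ is even. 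Therefore $|c|^{2}=1$.

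The main obstacle is the displayed skein identity, and inside it two points in particular: deriving the sliding property of $\Omega$ from the reduced relations, and the geometric bookkeeping that converts an ambient isotopy in the surgered manifold $(\surfcyl)_{L_0}$ into a sequence of isotopies and handle slides over $L_0$ before surgery. The remaining steps --- Schur's lemma for the matrix algebra $\End(\thetafn(\surf))$, the identification of the adjoint with the mirror skein, and the Gauss sum computations --- are routine.
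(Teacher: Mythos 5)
Your proposal is correct and follows essentially the same route as the proof of Theorem 8.1 in \cite{thetaTQFT}, which the present paper cites without reproducing: one establishes the skein-theoretic form of the exact Egorov identity for $\Omega(L)$ via the sliding property of $\Omega$ over surgery components, then invokes irreducibility of the Schr\"odinger representation (Schur's lemma) to conclude $\hat{T}=c\,\rho(h_L)$, and fixes $|c|=1$ by Gauss-sum/unitarity considerations. The steps you flag as routine (the sliding property from the reduced relations, the handle-slide bookkeeping, and the adjoint--mirror correspondence) are indeed the standard computations carried out in that reference.
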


Consequently, by Theorem \ref{thm_thetaskein}, the projective representation \eqref{eqn_hermitejacobiact} of the mapping class group is modeled on $\rlskein{\hbdy}$ as multiplication by the skein $\Omega(L)\in\rlskein{\surfcyl}$. Having now defined a completely topological model for classical theta functions and the action of the mapping class group, their study may, for the purposes of this paper, be entirely divorced from any dependence upon the complex structure on the surface $\surf$ which we made use of when applying the method of geometric quantization.

\subsection{Surgery diagrams for parameterized 3-cobordisms} \label{sec_surgerycobo}

The constructions of this section are standard (see \cite{bhmv}, \cite{turaev}).

Every compact, connected, oriented 3-manifold without boundary can be represented by surgery on a framed link in $S^3$, and any two surgery diagrams of the same manifold can be transformed into one another by a finite sequence of isotopies and the Kirby moves (k1) and (k2), where (k1) consists of adding/deleting a trivial link component with framing $\pm 1$ and (k2) consists of sliding one link component over the other.

A compact, connected, oriented 3-manifold with boundary can be represented by surgery in the complement of the regular neighborhood of a ribbon graph whose connected components are as shown in Figure~\ref{fig_graphandlebody}. Recall the following definition.

\begin{figure}[h]
\resizebox{.40\textwidth}{!}{\includegraphics{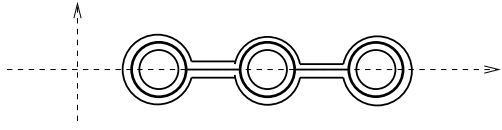}}
\caption{Ribbon graph whose regular neighborhood is a handlebody}
\label{fig_graphandlebody}
\end{figure}

\begin{definition}
A parameterized 3-cobordism
\[ M:\partial_- M \Rightarrow \partial_+ M \]
is a triple $(M,\partial_- M,\partial_+ M)$ where $M$ is an oriented compact 3-manifold and $\partial_- M$, $\partial_+ M$ are oriented closed subsurfaces which partition $\partial M$ such that the orientation of $\partial_+ M$ coincides with that of $\partial M$ while that of $\partial_- M$ does not. The subsurfaces $\partial_{\pm} M$ are each parameterized by fixed orientation preserving homeomorphisms
\begin{equation} \label{eqn_param}
f_-: \bigsqcup_{i=1}^{k_-} \surf[g^-_i] \to \partial_- M \qquad\text{and}\qquad f_+: \bigsqcup_{i=1}^{k_+} \surf[g^+_i] \to \partial_+ M.
\end{equation}
\end{definition}

Two parameterized 3-cobordisms $M:\partial_- M \Rightarrow \partial_+ M$ and $M':\partial_- M' \Rightarrow \partial_+ M'$ can be composed if and only if $\partial_+M$ and $\partial_-M'$ are parameterized by exactly the same surface. In this case, the composition
\begin{eqnarray} \label{eqn_composition}
(M'':\partial_-M'' \Rightarrow \partial_+M''):=(M':\partial_-M' \Rightarrow \partial_+M')\circ(M:\partial_-M \Rightarrow \partial_+M)
\end{eqnarray}
is obtained by gluing $M$ to $M'$ using the parameterizations \eqref{eqn_param}.

For any connected parameterized 3-cobordism $M:\partial_-M \Rightarrow \partial_+M$ there is a surgery diagram for $M$ such that:
\begin{enumerate}
\item
The ribbon graph $\Gamma_{\pm}$ corresponding to  $\partial_\pm M$ lies in the $yz$-plane of ${\mathbb R}^3\subset S^3$ in such a way that its ``circles'' are of the form $(y-j)^2+(z-k)^2=1/9$ and its ``edges'' lie on the line $z=0$ for $\Gamma_-$ and $z=1$ for $\Gamma_+$.
\item
The surgery link $L$ lies entirely inside the slice ${\mathbb R}^2\times(0,1)$.
\item
\begin{enumerate}
\item
The parameterization of $\partial_- M$ is obtained by translating each standard surface $\surf[g_i]$ to the boundary of the regular neighborhood of the $i$th component of $\Gamma_-$. In this way, the curves $a_j$ from Figure~\ref{fig_surface} run \emph{counterclockwise}.
\item
The parameterization of $\partial_+ M$ is obtained by again performing such a translation to the components of $\Gamma_+$, followed by a reflection in the plane $z=1$. In this way, the curves $a_j$ from Figure~\ref{fig_surface} run \emph{clockwise}.
\end{enumerate}
\end{enumerate}

\begin{remark} \label{rem_kirby}
Any two surgery diagrams that represent the same parameterized 3-cobordism can be transformed into one another by a sequence of isotopies, of Kirby moves (k1) and (k2) performed
on the surgery link, and by slides of the edges of the graph along the surgery link components.
\end{remark}

As an example, a surgery diagram for the cylinder
\[ \surfcyl:\surf\times\{0\}\Rightarrow\surf\times\{1\} \]
whose ends are parameterized by the identity parameterizations is shown in Figure \ref{fig_surgcyl}.

\begin{figure}[h]
\centerline{\psfig{file=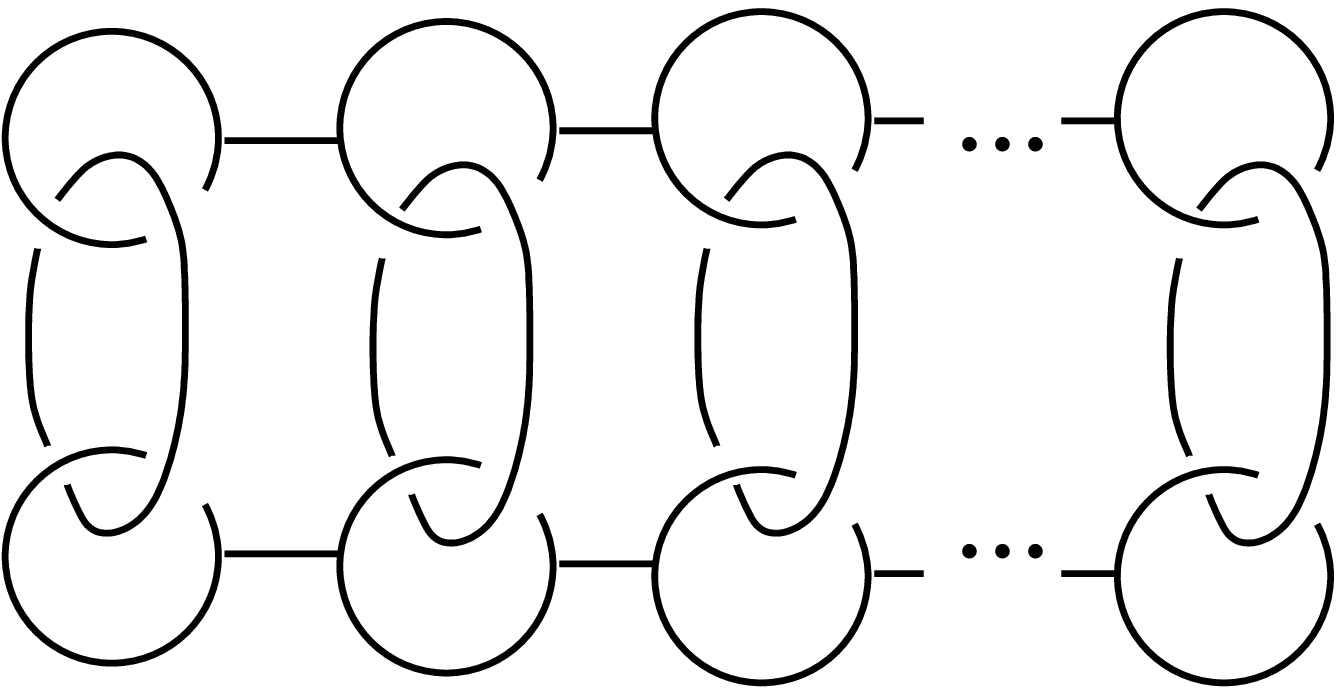,width=1.8in}}
\caption{Surgery diagram for $\surfcyl$, which consists of $g$ surgery curves.}
\label{fig_surgcyl}
\end{figure}

In what follows, we will require the definition of a nondegenerate symmetric bilinear form $\innprod$ on $\rlskein{\hbdy}$ which differs from the Hermitian form defined by \eqref{eqn_thetaform}. Given skeins $L$, $L'\in\rlskein{\hbdy}$, embed the skein $L$ in a regular neighborhood of the bottom ribbon graph of Figure \ref{fig_surgcyl} by a translation. Similarly, using just such a translation, embed the skein $L'$ in a regular neighborhood of the top ribbon graph. Replace the surgery curves of Figure \ref{fig_surgcyl} by the skein $\Omega$. The result is a skein
\begin{equation} \label{eqn_skeinform}
\langle L,L' \rangle\in\rlskein{S^3}=\mathbb{C}.
\end{equation}
One may check by direct calculation, using the standard formula for the sum of powers of a primitive $N$th root of unity, that for $(i_1,\ldots,i_g)$, $(j_1,\ldots,j_g)\in\mathbb{Z}_N^g$;
\begin{equation} \label{eqn_dualbasis}
\langle a_1^{i_1}\cdots a_g^{i_g},a_1^{j_1}\cdots a_g^{j_g} \rangle = \left\{\begin{array}{ll} N^{\frac{g}{2}}, & (i_1,\ldots,i_g)=(j_1,\ldots,j_g) \\ 0, & (i_1,\ldots,i_g)\neq(j_1,\ldots,j_g) \end{array}\right\}.
\end{equation}
Hence \eqref{eqn_skeinform} is nondegenerate.

The following result is due essentially to Turaev \cite{reshetikhinturaev}, \cite{turaev}.

\begin{theorem} \label{thm_turaevglue}
If $(L,\Gamma_-,\Gamma_+)$ and $(L',\Gamma_-',\Gamma_+')$ are surgery diagrams for the connected parameterized 3-cobordisms
\[ M:\partial_-M\Rightarrow\partial _+M \quad\text{and}\quad M':\partial_-M'\Rightarrow\partial _+M' \]
respectively, then a surgery diagram for the composition \eqref{eqn_composition} is obtained as follows:
\begin{enumerate}
\item
place the surgery diagram $(L,\Gamma_-,\Gamma_+)$ in ${\mathbb R}^3\subset S^3$ such that the ``circles'' of $\Gamma_-$ are of the form $(y-j)^2+(z-0)^2=1/9$ and its ``edges'' lie on $z=0$, and the ``circles'' of $\Gamma_+$ are of the form $(y-j)^2+(z-1)^2=1/9$ and its ``edges'' lie on $z=1$;
\item
place the surgery diagram $(L',\Gamma_-',\Gamma_+')$ in ${\mathbb R}^3\subset S^3$ such that the ``circles'' of $\Gamma_-'$ are of the form $(y-j)^2+(z-1)^2=1/9$ and its ``edges'' lie on $z=1$, and the ``circles'' of $\Gamma_+'$ are of the form $(y-j)^2+(z-2)^2=1/9$ and its ``edges'' lie on $z=2$; so the ``circles'' and ``edges'' of $\Gamma_+$ and $\Gamma_-'$ overlap;
\item
place a horizontal surgery circle in the plane $z=1$ around all but one of the components of $\Gamma_+=\Gamma_-'$ and endow it with the framing that is parallel to this plane;
\item
delete the edges of $\Gamma_+=\Gamma_-'$ and interpret the ``circles'' of these overlapping graphs as surgery curves.
\end{enumerate}
\end{theorem}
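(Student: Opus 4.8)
The plan is to realize the composition $M'' := M' \circ M$ as a gluing along the boundaries of the handlebodies $N(\Gamma_+)$ and $N(\Gamma_-')$ that were excised in the two surgery presentations, and then to show that this gluing is transcribed on surgery diagrams precisely by the four operations of the theorem. First I would spell out the precise content of the surgery model: the diagram $(L,\Gamma_-,\Gamma_+)$ presents the 3-manifold $M = (S^3)_L \setminus \mathrm{int}(N(\Gamma_-)\cup N(\Gamma_+))$ (where $(S^3)_L$ denotes $S^3$ surgered along $L$), with $\partial_\pm M = \partial N(\Gamma_\pm)$ and the parameterizations \eqref{eqn_param} induced by translating a standard handlebody onto each component of $N(\Gamma_\pm)$, with the reflection at $z=1$ built into $f_+$. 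In particular $N(\Gamma_\pm)$ lies on the side of $\partial_\pm M$ away from $M$, and under $f_\pm$ the curves $a_j$ of Figure~\ref{fig_surface} become the meridian disc boundaries of $N(\Gamma_\pm)$ while the curves $b_j$ become curves isotopic to the ``circles'' of $\Gamma_\pm$.

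Next I would reduce the composition to a purely local gluing problem. Setting $V' := (S^3)_{L'} \setminus \mathrm{int}\,N(\Gamma_+')$, the standard sub-handlebody $N(\Gamma_-')$ lies in the interior of $V'$ and $M' = V' \setminus \mathrm{int}\,N(\Gamma_-')$; hence
\[
M'' = \big(V' \setminus \mathrm{int}\,N(\Gamma_-')\big)\ \cup_{\,\partial N(\Gamma_-')\,=\,\partial_+ M}\ M ,
\]
that is, $M''$ is $V'$ with the sub-handlebody $N(\Gamma_-')$ cut out and $M$ glued in its place via $f_+$ and $f_-'$. The task thus reduces to the following local statement: if one excises a standard sub-handlebody---a neighborhood of the standard ribbon graph---from each of two surgered copies of $S^3$ and glues the two resulting walls together by the mirror-reflected parameterization, then the result is presented by the connect sum of the two $S^3$'s---performed in a ball meeting both neighborhoods, with the two graphs lined up---together with surgery along the ``circles'' of the now-overlapping graph (with planar framing) and along one $0$-framed horizontal circle encircling all but one of its connected components.

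To establish this local statement I would argue as follows. Capping a boundary surface off with a copy of the complementary handlebody is the same as attaching $2$-handles to the manifold along a complete system of meridians of that complementary handlebody, followed by a $3$-handle for each component; and attaching a $2$-handle along a boundary curve is surgery along it. By the first step and the mirror convention, those meridians are carried by $f_+$ to the curves $b_j$, hence to curves isotopic to the ``circles'' of $\Gamma_+$, and in the flat stacked position their surface framing is the planar framing, which for these unknots is the $0$-framing---exactly what makes the $2$-handle re-create the handlebody. This is what deleting the edges of $\Gamma_+ = \Gamma_-'$ and promoting its ``circles'' to $0$-framed surgery curves records. Carrying this out on \emph{both} walls at once inside a \emph{single} $S^3$ (the connect sum), rather than inside the double of the handlebody, introduces one spurious $S^1\times S^2$ summand for each connected component of $\Gamma_+$ beyond the first, and a standard handle slide shows that these are all killed by the single $0$-framed encircling circle; the surgeries $L$, $L'$ and the still-excised handlebodies $N(\Gamma_-)$, $N(\Gamma_+')$ ride along untouched, producing exactly the diagram of the theorem. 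I would then check that the auxiliary choices---which component is distinguished, where the encircling circle and the connect-sum ball are placed, how the graphs are isotoped into flat position---change the diagram only by isotopy, Kirby moves, and slides of graph edges along surgery components, hence present the same parameterized cobordism by Remark~\ref{rem_kirby}, and that the orientation reversal in $f_+$ is what makes the gluing $\partial_+ M = \partial_- M'$ orientation-compatible and the $2$-handle framings come out unmirrored.

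I expect the crux, and the main obstacle, to be the handle-calculus verification of the previous paragraph: that deleting the graph edges, keeping the ``circles'' as $0$-framed surgery curves, and adding one $0$-framed circle around all but one component genuinely implements ``simultaneously cap both walls with complementary handlebodies inside a single $S^3$''. This needs a careful surface-versus-planar framing comparison, a precise identification of the $b_j$-curves under the reflected parameterization, and an honest count of the spurious $S^1\times S^2$ summands so that exactly one encircling circle on the remaining components suffices. Everything else is routine manipulation of surgery presentations and orientation conventions---in essence, Turaev's argument in \cite{turaev}.
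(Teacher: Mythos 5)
The paper itself gives no proof of this statement---it is quoted from Turaev \cite{reshetikhinturaev}, \cite{turaev}---so your outline can only be judged against the standard argument, which it does follow in its overall architecture: reduce the composition to the local problem of regluing the excised handlebody neighborhoods along the reflection, and use the fact that re-attaching a handlebody to a parameterized boundary wall amounts to attaching $2$-handles along a complete meridian system of that handlebody (here the ``circles'' of the graph, whose surface framing in the flattened position is the planar, i.e.\ $0$, framing) followed by $3$-handles. For a \emph{connected} gluing surface this part of your sketch is correct and is essentially the cited argument.

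The genuine gap is in the step you yourself identify as the crux---the role of the encircling circles when $\Gamma_+=\Gamma_-'$ is disconnected---and as written your bookkeeping there runs in the wrong direction. Test case: let $M$ and $M'$ each be $S^3$ minus $k$ disjoint balls, with no surgery links and no circles in the graphs. Then $M''$ is the double of $S^3$ minus $k$ balls, i.e.\ $\#^{k-1}(S^1\times S^2)$, whereas the single-$S^3$ picture with the overlapping graphs and \emph{no} horizontal circles yields just $S^3$. So the naive one-$S^3$ construction produces too \emph{few} $S^1\times S^2$ summands rather than spurious ones to be killed: the horizontal $0$-framed circles are there to \emph{create} the missing summands, not to cancel extra ones. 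The correct mechanism is that performing the gluing inside a single $S^3$ implicitly tubes the $k$ components of the gluing surface together into one connected surface, and passing from the gluing along the tubed surface back to the gluing along the disconnected surface costs one surgery along the belt circle of each of the $k-1$ tubes; these belt circles are precisely the horizontal circles (and there are $k-1$ of them, each linking the adjacent graph components---not a single circle encircling $k-1$ components---which is why the paper can remark that their placement among the $k$ possible locations is immaterial up to handle slides, cf.\ Remark~\ref{rem_kirby}). Since this is exactly the step you defer to ``a standard handle slide'' and ``an honest count,'' and the count as stated is inverted, the proof is incomplete as it stands; the remainder of the outline (the connected case, the meridian/framing comparison under the reflected parameterization, and the independence of choices) is sound.
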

\

If $(L,\Gamma_-,\Gamma_+)$ and $(L',\Gamma_-',\Gamma_+')$ are the surgery diagrams of the two cobordisms, then Theorem \ref{thm_turaevglue} tells us how to associate a surgery diagram $(L'',\Gamma_-,\Gamma_+')$ to the composition. We denote the link  $L''$ by $L'\circ L$.

\begin{remark} \label{rem_turaevglue}
Theorem \ref{thm_turaevglue} extends to 4-dimensional handlebodies, cf. Theorem IV.4.3 of \cite{turaev}. If two 4-manifolds $W$ and $W'$ are obtained by gluing 2-handles to the 4-ball along framed links $L$ and $L'$ respectively, then the 4-manifold $W''$ obtained from $W$ and $W'$ by gluing a regular neighborhood of the top ribbon graph $\Gamma_+$ in $\partial W$ to a regular neighborhood of the bottom ribbon graph $\Gamma_-'$ in $\partial W'$ along a reflection is the result of gluing 2-handles to the 4-ball along the link $L'':=L'\circ L$.
\end{remark}

An illustration of the composition is shown in Figure~\ref{fig_gluingcobsurg}. Note that in this picture we can slide the horizontal circle at the very right over the horizontal circle in the middle to obtain the horizontal circle that links the first two vertical circles. We can also flip the horizontal circle in the middle so that it links the two surgery circles on the left. Thus it is irrelevant on which two of the three possible locations we place the horizontal circles.

\begin{figure}[h]
\resizebox{.60\textwidth}{!}{\includegraphics{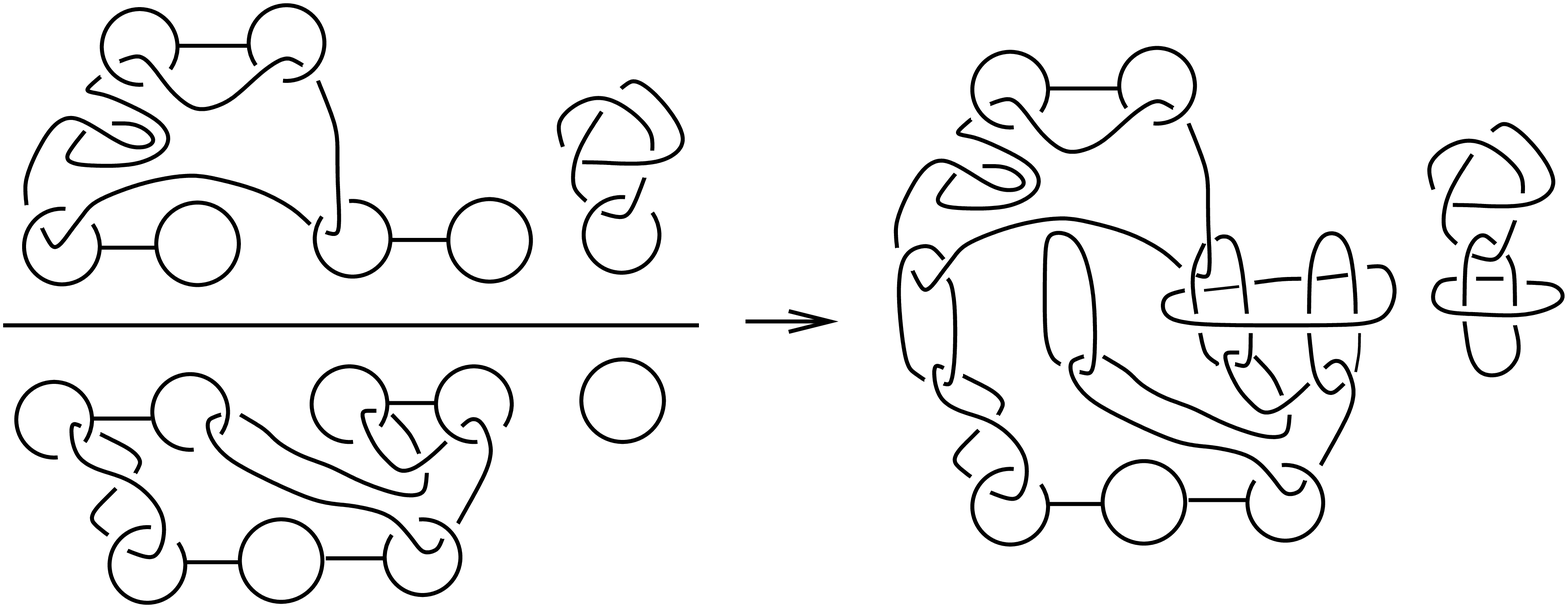}}
\caption{Gluing of surgery diagrams for parameterized 3-cobordisms.}
\label{fig_gluingcobsurg}
\end{figure}

\section{The resolution of the projective ambiguity of the representation of the mapping class group} \label{sec_resolve}

The algebraic construction that transforms  the projective representation of the mapping class group into a true representation is well-known; it is the Segal-Shale-Weil representation. Our purpose here is to give a topological solution to the projectivity issue. The construction is inspired by \cite{walker} and the tools are from \cite{bhmv} and \cite{turaev}. First, we must introduce  the Maslov index (see \cite{kashiwarashapira}).

\begin{definition}
Let $\mathbf{L}_1$, $\mathbf{L}_2$ and $\mathbf{L}_3$ be three Lagrangian subspaces of a real symplectic vector space $(V,\omega)$. The \emph{Maslov index} $\tau(\mathbf{L}_1,\mathbf{L}_2,\mathbf{L}_3)$ is the signature of the symmetric bilinear form
\begin{displaymath}
([\mathbf{L}_1+\mathbf{L}_2]\cap\mathbf{L}_3)^{\otimes 2} \to  \mathbb{R}, \quad
(x_1+x_2)\otimes x_3 \mapsto  \omega(x_2,x_3);\mbox{ where }x_i\in {\mathbf{L}_i}.
\end{displaymath}
\end{definition}

Now, we may make the following definition.

\begin{definition} \label{def_extmcg}
The \emph{extended mapping class group} of the surface $\surf$ is the ${\mathbb Z}$-extension of $\mcg{\surf}$ defined by the multiplication rule
\begin{eqnarray*}
(h',n')(h,n)=(h' h, n+n'+\tau(h'_*h_*(\mathbf{L}_g),h'_*(\mathbf{L}_g),\mathbf{L}_g)),
\end{eqnarray*}
where
\begin{equation} \label{eqn_stdlag}
\mathbf{L}_g:=\ker[H_1(\surf,\mathbb{R}) \to H_1(\hbdy,\mathbb{R})]
\end{equation}
is the Lagrangian subspace spanned by the curves $b_i$ from Figure \ref{fig_surface}. We denote the extended mapping class group by $\emcg{\surf}$.
\end{definition}

Given a framed link $F$ in $S^3$, denote its signature by $\sigma(F)$. Recall that this is the signature of the 4-manifold obtained by attaching 2-handles to the 4-ball along regular neighborhoods of the components of $F$. If $L$ is a framed link in the cylinder $\surfcyl$, we may embed this link in $S^3$ using the surgery presentation for this cylinder given in Figure \ref{fig_surgcyl}. Deleting the ribbon graphs at each end of this surgery presentation yields a link $L_*$ whose components are formed from the components of $L$ and the $g$ annuli of Figure \ref{fig_surgcyl}. An example is shown in Figure \ref{fig_surgmapdiag}. Define $\sigma_*(L)$ to be the signature of this link $L_*$.

\begin{figure}[h]
\centerline{\psfig{file=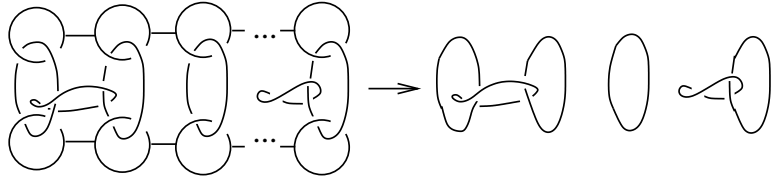,width=4in}}
\caption{Deleting the ribbon graphs yields a link $L_*$ formed from $L$ and the $g$ annuli.}
\label{fig_surgmapdiag}
\end{figure}

\begin{theorem} \label{thm_masgrpact}
There is a well-defined representation of the extended mapping class group $\emcg{\surf}$ on the space of theta functions $\rlskein{\hbdy}$ given by
\begin{equation} \label{eqn_masgrpact}
\begin{array}{ccc}
\emcg{\surf} & \to & \rlskein{\surfcyl}, \\
(h,n) & \mapsto & \mathcal{F}(h,n):=e^{-\frac{\pi i}{4}(n+\sigma_*(L))}\Omega(L);
\end{array}
\end{equation}
where $L$ is any surgery presentation of $h$.
\end{theorem}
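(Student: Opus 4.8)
The plan is to prove two things: that the skein $\mathcal{F}(h,n)$ is independent of the chosen surgery presentation $L$ of $h$, so that \eqref{eqn_masgrpact} is well defined, and that $(h,n)\mapsto\mathcal{F}(h,n)$ is a group homomorphism. Invertibility --- indeed unitarity --- of the resulting operators on $\rlskein{\hbdy}$ is then automatic: by Theorem~\ref{thm_mcgskein}, together with the isomorphisms of Theorems~\ref{thm_opskein} and~\ref{thm_thetaskein}, the skein $\Omega(L)$ acts on $\rlskein{\hbdy}$ as a unit-modulus multiple of the unitary operator $\rho(h)$, and the scalar $e^{-\frac{\pi i}{4}(n+\sigma_*(L))}$ also has modulus one.

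For well-definedness, invoke Remark~\ref{rem_kirby}: two surgery presentations of $h$ differ by isotopies, the Kirby moves (k1) and (k2) performed on the surgery link, and slides of graph edges along surgery components. The skein $\Omega(L)$ is invariant under isotopy, and it is invariant under (k2) and edge slides because $\Omega$ is the Kirby colour; this handle-slide property is precisely why $\Omega$ enters the picture (cf.\ \cite{bhmv}, \cite{turaev}). The signature $\sigma_*(L)=\sigma(L_*)$ is likewise unchanged by (k2) (a handle slide does not change the underlying $4$-manifold, hence not its signature) and by edge slides (which do not alter the link $L_*$). It therefore suffices to treat the move (k1): adjoining to $L$ a trivial unknot $U_{\pm1}$ with framing $\pm1$, disjoint from everything, multiplies $\Omega(L)$ by
\[
\Omega(U_{\pm1})=N^{-\frac{1}{2}}\sum_{j=0}^{N-1}e^{\pm\pi i j^2/N}=e^{\pm\frac{\pi i}{4}},
\]
the last equality being the classical Gauss sum evaluation, valid because $N$ is even; simultaneously $\sigma_*(L)$ changes by $\pm1$, so the prefactor $e^{-\frac{\pi i}{4}(n+\sigma_*(L))}$ is multiplied by $e^{\mp\frac{\pi i}{4}}$. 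The two contributions cancel, and $\mathcal{F}(h,n)$ is independent of $L$.

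For the homomorphism property, fix surgery presentations $L$ of $h$ and $L'$ of $h'$; by Theorem~\ref{thm_turaevglue}, $L'\circ L$ is a surgery presentation of $h'h$. Composition of the operators on $\rlskein{\hbdy}$ induced by $\Omega(L)$ and $\Omega(L')$ is the algebra multiplication of $\rlskein{\surfcyl}$, realized topologically by stacking the two decorated cylinders; unwinding Theorem~\ref{thm_turaevglue} --- the extra horizontal $\Omega$-coloured circle of the gluing recipe is exactly what implements the product --- one finds that $\Omega(L')\cdot\Omega(L)=\Omega(L'\circ L)$, so the naive skein invariant is multiplicative under gluing and the anomaly of the construction is carried entirely by the signatures $\sigma_*$. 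Writing $\tau=\tau\bigl(h'_*h_*(\mathbf{L}_g),h'_*(\mathbf{L}_g),\mathbf{L}_g\bigr)$ for the Maslov index of Definition~\ref{def_extmcg}, we obtain
\[
\mathcal{F}(h',n')\circ\mathcal{F}(h,n)=e^{-\frac{\pi i}{4}\left(n+n'+\sigma_*(L)+\sigma_*(L')\right)}\,\Omega(L'\circ L)
\]
whereas
\[
\mathcal{F}\bigl((h',n')(h,n)\bigr)=e^{-\frac{\pi i}{4}\left(n+n'+\tau+\sigma_*(L'\circ L)\right)}\,\Omega(L'\circ L).
\]
Thus the entire statement reduces to the signature identity
\[
\sigma_*(L)+\sigma_*(L')-\sigma_*(L'\circ L)=\tau\bigl(h'_*h_*(\mathbf{L}_g),h'_*(\mathbf{L}_g),\mathbf{L}_g\bigr).
\]

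The hard part is this last identity, and it is here that the techniques of \cite{bhmv}, \cite{turaev} and \cite{walker} are needed. By Remark~\ref{rem_turaevglue}, the framed links $L_*$, $L'_*$ and $(L'\circ L)_*$ bound $4$-manifolds $W$, $W'$ and $W''$ obtained by attaching $2$-handles to $B^4$, with $\sigma_*$ equal to the signature of the corresponding $4$-manifold, and $W''$ is obtained from $W\sqcup W'$ by gluing a collar of the standard surface $\surf$ in $\partial W$ to one in $\partial W'$. The failure of Novikov additivity of the signature under such a codimension-zero gluing is governed by Wall's non-additivity theorem, which expresses it as a Maslov triple index of three Lagrangian subspaces of $H_1(\surf,\mathbb{R})$ determined by the way the relevant boundary pieces meet. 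The remaining work is to identify these three Lagrangians, in the correct cyclic order and with the correct signs, with $\mathbf{L}_g$, its image $h_*(\mathbf{L}_g)$ and the image $h'_*h_*(\mathbf{L}_g)$: this requires tracking how the surgery presentations of Section~\ref{sec_surgerycobo} encode $h$ and $h'$, recognizing the handlebody kernel $\mathbf{L}_g$ from \eqref{eqn_stdlag} as the Lagrangian of the gluing surface, and using the parameterization conventions of that section --- the counterclockwise/clockwise curves $a_j$ and the reflection in the plane $z=1$ --- to pin down the ordering in the Maslov index. Matching the outcome with the multiplication rule of Definition~\ref{def_extmcg} then completes the proof.
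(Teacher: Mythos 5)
Your strategy coincides with the paper's: well-definedness via Kirby calculus, with the (k1) move handled by the Gauss sum $\Omega(U_{\pm 1})=N^{-1/2}\sum_j e^{\pm \pi i j^2/N}=e^{\pm\frac{\pi i}{4}}$ cancelling against the unit change in $\sigma_*$, and multiplicativity reduced to the signature defect computed by Wall's non-additivity formula through the $4$-dimensional reading of Remark \ref{rem_turaevglue}. Two points of comparison. First, the paper verifies independence of the surgery presentation only after pairing against the nondegenerate form \eqref{eqn_skeinform}, which turns everything into a closed skein in $S^3$ where the moves of Remark \ref{rem_kirby} (in particular the edge slides of the ribbon graphs) are cleanly absorbed by the Kirby colour $\Omega$; your direct check in the cylinder is the same computation in slightly less convenient packaging. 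Second --- and this is the one step your sketch genuinely elides --- a single application of Wall's formula to $W\sqcup W'$ glued along the middle surface computes the signature of the Turaev composition $L'_*\circ L_*$, a link carrying $3g$ annuli and an extra horizontal circle, and not $\sigma_*(L'')$ for the stacked link $L''=L'\cdot L$, which is the surgery presentation of $h'h$ that actually enters $\mathcal{F}(h'h,\,\cdot\,)$. The paper bridges this by performing surgery on the annuli, isotoping $L'$ on top of $L$ (which leaves the $4$-manifold signature unchanged), and applying Wall's formula a second time, where the relevant Maslov index $\tau(h''_*(\mathbf{L}_g),\mathbf{L}_g,\mathbf{L}_g)$ vanishes because two of its arguments coincide; this gives $\sigma(L'_*\circ L_*)=\sigma(L''_*)$. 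With that step supplied, and with the three Lagrangians in Wall's formula identified in the order $h'_*h_*(\mathbf{L}_g)$, $h'_*(\mathbf{L}_g)$, $\mathbf{L}_g$ (note your prose lists $h_*(\mathbf{L}_g)$ where the middle entry should be $h'_*(\mathbf{L}_g)$, though your displayed $\tau$ is correct), your argument closes and agrees with the paper's.
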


\begin{remark}
Here we have used $\rlskein{\surfcyl}\cong\End(\thetafn(\surf))$, cf. Theorem \ref{thm_opskein}.
\end{remark}

\begin{proof}
To prove that $\mathcal{F}(h,n)$ does not depend on the choice of surgery presentation $L$ for $h$; it suffices, since \eqref{eqn_skeinform} is nondegenerate, to prove that the expression
\begin{equation} \label{eqn_skinvariance}
e^{-\frac{\pi i}{4}\sigma_*(L)} \langle \Omega(L)\cdot a,a'\rangle; \quad a,a'\in\rlskein{\hbdy}
\end{equation}
is independent of $L$. But \eqref{eqn_skinvariance} is just the skein in $S^3$ obtained from $e^{-\frac{\pi i}{4}\sigma(L_*)}\Omega(L_*)$ by inserting $a$ at the bottom of Figure \ref{fig_surgcyl} and $a'$ at the top. Standard Kirby calculus arguments (cf. Remark \ref{rem_kirby}) and some simple calculations involving Gauss sums now imply the claim.

Now, we will show that \eqref{eqn_masgrpact} is multiplicative. Suppose that $h'$ and $h$ are two homeomorphisms represented by surgery on framed links $L'$ and $L$ respectively. Then the composite $h'':=h'\circ h$ is represented by surgery on the framed link $L'':=L'\cdot L$ in which $L'$ is placed on top of $L$. We obviously have the identity
\[ \Omega(L'') = \Omega(L')\cdot\Omega(L), \]
so we only need to calculate the signatures of the corresponding links.

Consider the link $L'_*\circ L_*$ defined by Theorem \ref{thm_turaevglue} and shown on the left of Figure \ref{fig_linkisotopy} together with two accompanying ribbon graphs. By Remark \ref{rem_turaevglue} and Wall's formula for the nonadditivity of the signature \cite{wall}, the signature of this link is
\[ \sigma(L'_*\circ L_*) = \sigma(L'_*)+\sigma(L_*) - \tau(h'_*h_*(\mathbf{L}_g),h'_*(\mathbf{L}_g),\mathbf{L}_g). \]
Performing surgery on the $3g$ annuli of Figure \ref{fig_linkisotopy} in the complement of a neighborhood of the ribbon graphs yields a cylinder $\surfcyl$, so we may slide the link $L'$ on top of $L$ yielding the link shown on the right of Figure \ref{fig_linkisotopy}. Note that this does not change the signature of course, as it just amounts to an isotopy of the placement of the 2-handles on the 4-manifold. Another application of Wall's formula yields that the signature of the link on the right, obtained after deleting the two ribbon graphs, is just
\[ \sigma(\emptyset_*\circ L''_*)=\sigma(\emptyset_*) + \sigma(L''_*) -\tau(h''_*(\mathbf{L}_g),\mathbf{L}_g,\mathbf{L}_g) = \sigma(L''_*). \]
Hence
\[ \sigma_*(L'') = \sigma(L'_*\circ L_*) = \sigma_*(L')+\sigma_*(L) - \tau(h'_*h_*(\mathbf{L}_g),h'_*(\mathbf{L}_g),\mathbf{L}_g), \]
which implies the result.

\begin{figure}[h]
\resizebox{.60\textwidth}{!}{\includegraphics{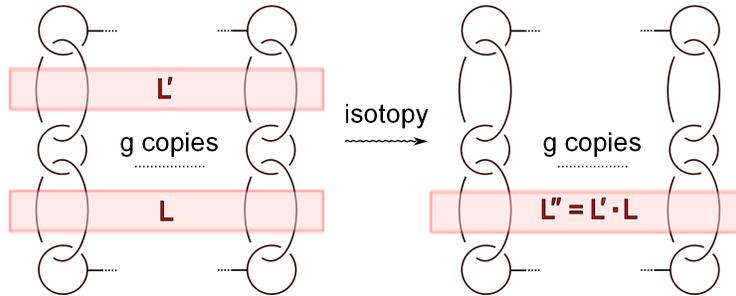}}
\caption{After performing surgery on the $3g$ annuli, we may slide $L'$ on top of $L$.}
\label{fig_linkisotopy}
\end{figure}
\end{proof}

\section{The Topological Quantum Field Theory associated to classical theta functions} \label{sec_tqft}

\subsection{The category of extended surfaces and the functor of states}

In this section we define an extended category of surfaces which forms part of the background for our TQFT and the corresponding functor of states.

\subsubsection{The category of extended surfaces}

Intuitively, the category of extended surfaces is a larger universe that contains all extended mapping class groups. In this way, the projective anomaly of the representation \eqref{eqn_hermitejacobiact} of the mapping class group will be incorporated into the definition of a TQFT.

\begin{definition} \label{def_extsurf}
The category of extended surfaces has
\begin{enumerate}
\item
as objects the extended surfaces, which consist of pairs $\mathbf{\Sigma}:=(\Sigma,\mathbf{L}_{\Sigma})$, where $\Sigma$ is a closed oriented surface and $\mathbf{L}_{\Sigma}$ is a Lagrangian subgroup of $H_1(\Sigma,\mathbb{Z})$ that splits as a direct sum of Lagrangian subgroups amongst the connected components of $\Sigma$. Recall that a Lagrangian subgroup of $H_1(\Sigma,\mathbb{Z})$ is a maximal subgroup amongst those on which the intersection form vanishes. If $\mathbf{L}$ is a Lagrangian subgroup, then $\mathbf{L}_{\mathbb{R}}:=\mathbf{L}\otimes_{\mathbb{Z}}\mathbf{R}$ is a Lagrangian subspace of $H_1(\Sigma,\mathbb{R})$.
\item
as morphisms the extended homeomorphisms. An extended homeomorphism $\mathbf{h}:\mathbf{\Sigma}\to\mathbf{\Sigma}'$ is a pair $\mathbf{h}:=(h,n_h)$, where $h:\Sigma\to\Sigma'$ is an orientation preserving homeomorphism and $n_h$ is an integer, which we refer to as the \emph{weight} of $\mathbf{h}$. The homeomorphism $h$ need not preserve the Lagrangian subspaces. Extended homeomorphisms
\[ (h,n_h): (\Sigma,\mathbf{L}) \to (\Sigma',\mathbf{L}') \quad\text{and}\quad (h',n'_h): (\Sigma',\mathbf{L}') \to (\Sigma'',\mathbf{L}'') \]
may be composed according to the rule
\begin{equation} \label{eqn_mascompose}
(h',n'_h) \circ (h,n_h) = (h'\circ h,n'_h+n_h+\tau(h'_*h_*(\mathbf{L}_{\mathbb{R}}),h'_*(\mathbf{L}'_{\mathbb{R}}),\mathbf{L}''_{\mathbb{R}})).
\end{equation}
\end{enumerate}
We denote the standard extended surface of genus $g$ by $\esurf:=(\surf,\mathbf{L}_g)$, where $\mathbf{L}_g$ is as defined by \eqref{eqn_stdlag}. We denote the empty extended surface by $\mathbf{\Phi}:=(\emptyset,\{0\})$.
\end{definition}

\begin{remark}
The usual cocycle identity (cf. Proposition 1.5.8 of \cite{lionvergne}) for the Maslov index $\tau$ implies that \eqref{eqn_mascompose} is associative. All morphisms in this category are invertible, with $(h,n)^{-1}=(h^{-1},-n)$. Note that the extended mapping class group $\emcg{\surf}$ defined by Definition \ref{def_extmcg} may be identified as the quotient under the isotopy relation of the group $\Aut(\esurf)$ of automorphisms in this category of the standard extended surface of genus $g$.
\end{remark}

\begin{remark} \label{rem_param}
In principle, it is possible to work in the category in which we permit our surfaces to be decorated by any Lagrangian subspace of $H_1(\Sigma,\mathbb{R})$, but doing so provides no extra utility. In fact, restricting the choice of subspace as above will significantly simplify Maslov index calculations, as given any connected extended surface $(\Sigma,\mathbf{L})$, we may always find a parameterization $f:\surf\to\Sigma$ such that $f_*(\mathbf{L}_g)=\mathbf{L}$. In what follows, we will omit the subscript $\mathbb{R}$ from the Maslov index in \eqref{eqn_mascompose}, as no confusion can possibly arise.
\end{remark}

Disjoint unions of extended surfaces may be formed in the obvious way, with the weight of a disjoint union of morphisms being defined as the sum of the individual weights.

\subsubsection{The functor of states}

Having defined our category of surfaces, we may proceed to define the functor of states. We employ the following standard idea, which is entirely analogous to covering a manifold with a multitude of coordinate charts. Suppose that $\mathbf{\Sigma}=(\Sigma,\mathbf{L}_{\Sigma})$ is an extended surface. A \emph{parameterization} of $\mathbf{\Sigma}$ is an extended homeomorphism
\begin{equation} \label{eqn_extparam}
\mathbf{f}:\bigsqcup_{i=1}^k\esurf[g_i]\to\mathbf{\Sigma}.
\end{equation}
Consider the small category $\mathscr{P}(\mathbf{\Sigma})$ whose objects consist of parameterizations \eqref{eqn_extparam} of $\mathbf{\Sigma}$, and whose set of morphisms between two parameterizations $\mathbf{f}$ and $\mathbf{f}'$ consists of the one and only one extended homeomorphism between the domains of $\mathbf{f}$ and $\mathbf{f}'$ that commutes with the parameterizations. Consider the functor $\mathscr{L}$ from $\mathscr{P}(\mathbf{\Sigma})$ to complex vector spaces that assigns to a parameterization \eqref{eqn_extparam} the vector space
\begin{equation} \label{eqn_paramfun}
\mathscr{L}(\mathbf{f}):=\bigotimes_{i=1}^k \rlskein{\hbdy[g_i]},
\end{equation}
and assigns to a morphism of parameterizations the representation \eqref{eqn_masgrpact}, which is extended to disjoint unions of surfaces in the obvious way, permuting the factors of \eqref{eqn_paramfun} as necessary.

\begin{definition} \label{def_states}
We define a functor $\mathscr{V}$ from extended surfaces to complex vector spaces by the following limit,
\[ \mathscr{V}(\mathbf{\Sigma}) := \dilim{\mathbf{f}\in\mathscr{P}(\mathbf{\Sigma})}{\mathscr{L}(\mathbf{f})}. \]
Consequently, we have a commuting system of maps;
\[ \iota_{\mathbf{f}}:\mathscr{L}(\mathbf{f})\to\mathscr{V}(\mathbf{\Sigma}), \quad\mathbf{f}\in\mathscr{P}(\mathbf{\Sigma}). \]
Since this limit is taken over the groupoid $\mathscr{P}(\mathbf{\Sigma})$, the maps $\iota_{\mathbf{f}}$ are in fact all \emph{isomorphisms}, and the notions of direct limit and inverse limit coincide.

Given an extended homeomorphism $\mathbf{h}:\mathbf{\Sigma}\to\mathbf{\Sigma}'$, the basic property of a limit asserts the existence of a unique map $\mathscr{V}(\mathbf{h})$ making the following diagram commute for all parameterizations $\mathbf{f}\in\mathscr{P}(\mathbf{\Sigma})$:
\[ \xymatrix{ \mathscr{V}(\mathbf{\Sigma}) \ar[r]^{\mathscr{V}(\mathbf{h})} & \mathscr{V}(\mathbf{\Sigma}') \\ \mathscr{L}(\mathbf{f}) \ar[u]^{\iota_{\mathbf{f}}} \ar@{=}[r] & \mathscr{L}(\mathbf{h}\circ\mathbf{f}) \ar[u]^{\iota_{\mathbf{h}\circ\mathbf{f}}} } \]
\end{definition}

In the following proposition, we mostly adopt the point of view of Turaev \cite[\S III.1.2]{turaev}.

\begin{proposition} \label{prop_funcstates}
The functor $\mathscr{V}$ from the category of extended surfaces to complex vector spaces satisfies the following axioms:
\begin{enumerate}
\item
For any two extended surfaces $\mathbf{\Sigma}$ and $\mathbf{\Sigma}'$, there is a canonical identification isomorphism
\[ \mathscr{V}(\mathbf{\Sigma}\sqcup\mathbf{\Sigma}') = \mathscr{V}(\mathbf{\Sigma})\otimes\mathscr{V}(\mathbf{\Sigma}') \]
such that:
\begin{enumerate}
\item
This identification is natural in $\mathbf{\Sigma}$ and $\mathbf{\Sigma}'$.
\item \label{axiom_permute}
The diagram
\[ \xymatrix{ \mathscr{V}(\mathbf{\Sigma}\sqcup\mathbf{\Sigma}') \ar@{=}[r] \ar[d] & \mathscr{V}(\mathbf{\Sigma})\otimes\mathscr{V}(\mathbf{\Sigma}') \ar[d]^P \\ \mathscr{V}(\mathbf{\Sigma}'\sqcup\mathbf{\Sigma}) \ar@{=}[r] & \mathscr{V}(\mathbf{\Sigma}')\otimes\mathscr{V}(\mathbf{\Sigma}) } \]
commutes, where $P(x\otimes y):=y\otimes x$ and the vertical map on the left is induced by the identification $\mathbf{\Sigma}\sqcup \mathbf{\Sigma}' = \mathbf{\Sigma}'\sqcup\mathbf{\Sigma}$.
\item
The diagram
\[ \xymatrix{ \mathscr{V}(\mathbf{\Sigma})\otimes\mathscr{V}(\mathbf{\Sigma}')\otimes\mathscr{V}(\mathbf{\Sigma}'') \ar@{=}[r] \ar@{=}[d] & \mathscr{V}(\mathbf{\Sigma}\sqcup\mathbf{\Sigma}')\otimes\mathscr{V}(\mathbf{\Sigma}'') \ar@{=}[d] \\ \mathscr{V}(\mathbf{\Sigma})\otimes\mathscr{V}(\mathbf{\Sigma}'\sqcup\mathbf{\Sigma}'') \ar@{=}[r] & \mathscr{V}(\mathbf{\Sigma}\sqcup\mathbf{\Sigma}'\sqcup\mathbf{\Sigma}'') } \]
commutes.
\end{enumerate}
\item
The functor $\mathscr{V}$ is unital in that $\mathscr{V}(\mathbf{\Phi})=\mathbb{C}$ and the diagram
\[ \xymatrix{ \mathscr{V}(\mathbf{\Phi}\sqcup\mathbf{\Sigma}) \ar@{=}[r] \ar@{=}[d] & \mathscr{V}(\mathbf{\Phi})\otimes\mathscr{V}(\mathbf{\Sigma}) \ar@{=}[d] \\ \mathscr{V}(\mathbf{\Sigma}) \ar@{=}[r] & \mathbb{C}\otimes\mathscr{V}(\mathbf{\Sigma}) } \]
is commutative for any extended surface $\mathbf{\Sigma}$.
\item
There is a nonzero complex number $\anomaly$, called the \emph{anomaly}, such that for any extended surface $\mathbf{\Sigma}$,
\[ \mathscr{V}(\id_{\Sigma},1) = \anomaly\cdot\id_{\mathscr{V}(\mathbf{\Sigma})}. \]
For the functor $\mathscr{V}$, this anomaly is given by $a:=e^{-\frac{\pi i}{4}}$.
\end{enumerate}
\end{proposition}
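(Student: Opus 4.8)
The plan is to exploit the fact that each indexing category $\mathscr{P}(\mathbf{\Sigma})$ is a \emph{contractible} groupoid---between any two parameterizations there is exactly one morphism, since both its underlying homeomorphism and its weight are forced by commutation with the parameterizations---so that all the structure maps $\iota_{\mathbf{f}}$ are isomorphisms and every axiom reduces to transporting structure along a chosen parameterization and checking compatibility with the transition isomorphisms $\mathscr{L}(\phi)$, which are values of the representation $\mathcal{F}$ of Theorem~\ref{thm_masgrpact} extended to disjoint unions by tensor products and permutations of the factors of~\eqref{eqn_paramfun}. Two elementary properties of the Maslov index $\tau$ will do most of the work: it vanishes whenever two of its three arguments coincide, and it is additive under direct sums of symplectic spaces carrying direct-sum decompositions of their Lagrangian subspaces, so that the composition law~\eqref{eqn_mascompose} is compatible with disjoint unions. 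I expect the only real obstacle to be organizational rather than conceptual: one must arrange the ``obvious'' extension of $\mathscr{L}$ and $\mathcal{F}$ to disjoint unions so that it is genuinely well defined---independent of how a total weight is distributed among components, which works because the weight enters~\eqref{eqn_masgrpact} only through the global scalar $e^{-\frac{\pi i}{4}(\,\cdot\,)}$---and so that it respects~\eqref{eqn_mascompose} via Maslov additivity. Once that is settled, axioms~(1) and~(2) become category-theoretic tautologies.

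For axiom~(1) I would argue that disjoint union of parameterizations defines a functor from pairs consisting of a parameterization of $\mathbf{\Sigma}$ and a parameterization of $\mathbf{\Sigma}'$ to $\mathscr{P}(\mathbf{\Sigma}\sqcup\mathbf{\Sigma}')$, and that this functor is an equivalence of groupoids, because any parameterization of $\mathbf{\Sigma}\sqcup\mathbf{\Sigma}'$ partitions the connected components of its domain according to which of $\mathbf{\Sigma}$, $\mathbf{\Sigma}'$ they cover (here the splitting of the Lagrangian of a disjoint union and the additivity of $\tau$ are used to recognize a disjoint union of morphisms of parameterizations as again being one). Under this equivalence $\mathscr{L}$ becomes the external tensor product of $\mathscr{L}$ with itself, so passing to colimits gives the canonical identification $\mathscr{V}(\mathbf{\Sigma}\sqcup\mathbf{\Sigma}')=\mathscr{V}(\mathbf{\Sigma})\otimes\mathscr{V}(\mathbf{\Sigma}')$. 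Naturality~(1a) holds because an extended homeomorphism out of a disjoint union acts componentwise; the permutation square~(1b) holds because $\mathscr{L}$ of a component-permuting morphism permutes the tensor factors of~\eqref{eqn_paramfun}; and the associativity square~(1c), together with axiom~(2)---where $\mathscr{V}(\mathbf{\Phi})=\mathbb{C}$ because the parameterizations of $\mathbf{\Phi}$ have empty domain, on which~\eqref{eqn_paramfun} is the empty tensor product---all reduce to the coherence of $\otimes$ on complex vector spaces.

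For axiom~(3) I would fix $\mathbf{\Sigma}$ and a parameterization $\mathbf{f}=(f,n_f)\in\mathscr{P}(\mathbf{\Sigma})$. By~\eqref{eqn_mascompose} and the vanishing of $\tau$ when its last two arguments agree, $(\id_\Sigma,1)\circ\mathbf{f}$ equals $(f,n_f+1)$, which is again a parameterization, say $\mathbf{f}'$; and by~\eqref{eqn_mascompose} once more, now with $\tau$'s first two arguments agreeing, the unique morphism $\phi\colon\mathbf{f}\to\mathbf{f}'$ in $\mathscr{P}(\mathbf{\Sigma})$ has underlying homeomorphism the identity and weight $-1$. The defining diagram for $\mathscr{V}(\id_\Sigma,1)$ together with the cocone relation $\iota_{\mathbf{f}'}\circ\mathscr{L}(\phi)=\iota_{\mathbf{f}}$ then identify $\mathscr{V}(\id_\Sigma,1)$, under the isomorphism $\iota_{\mathbf{f}}$, with $\mathscr{L}(\phi)^{-1}$. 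Since the underlying homeomorphism of $\phi$ is the identity, $\mathscr{L}(\phi)$ is a tensor product of values of $\mathcal{F}$ on central elements, and evaluating~\eqref{eqn_masgrpact} on the empty surgery presentation---for which $\Omega(\emptyset)=1$ and, since the surgery curves of Figure~\ref{fig_surgcyl} are zero-framed and pairwise unlinked, $\sigma_*(\emptyset)=0$---gives $\mathscr{L}(\phi)=e^{\frac{\pi i}{4}}\cdot\id$. Hence $\mathscr{V}(\id_\Sigma,1)=e^{-\frac{\pi i}{4}}\cdot\id$ and $a=e^{-\frac{\pi i}{4}}$; equivalently, the anomaly is the scalar by which the central element $(\id_{\surf},1)$ acts in the representation $\mathcal{F}$. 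The case $\mathbf{\Sigma}=\mathbf{\Phi}$ follows from the same computation with the evident convention for the empty domain. So the whole proposition rests on the two small facts highlighted above---the vanishing of $\tau$ with a repeated argument, and $\sigma_*(\emptyset)=0$ with $\Omega(\emptyset)=1$---with everything else being formal.
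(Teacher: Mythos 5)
Your proposal is correct and follows essentially the same route as the paper: the identification $\mathscr{V}(\mathbf{\Sigma}\sqcup\mathbf{\Sigma}')=\mathscr{V}(\mathbf{\Sigma})\otimes\mathscr{V}(\mathbf{\Sigma}')$ is obtained by identifying the limit over $\mathscr{P}(\mathbf{\Sigma}\sqcup\mathbf{\Sigma}')$ with the limit over $\mathscr{P}(\mathbf{\Sigma})\times\mathscr{P}(\mathbf{\Sigma}')$ of the external tensor product, and the remaining axioms are treated as formal consequences of the limit construction. The only difference is that you spell out what the paper dismisses as ``similarly tautological,'' in particular the anomaly computation $\mathscr{V}(\id_\Sigma,1)=\mathcal{F}(\id,-1)^{-1}=e^{-\frac{\pi i}{4}}\cdot\id$ via the vanishing of $\tau$ with a repeated argument and $\sigma_*(\emptyset)=0$, which is a correct and welcome elaboration.
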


\begin{proof}
Given extended surfaces $\mathbf{\Sigma}$ and $\mathbf{\Sigma}'$ we have
\begin{displaymath}
\begin{split}
\mathscr{V}(\mathbf{\Sigma}\sqcup\mathbf{\Sigma}') &= \dilim{\mathbf{g}\in\mathscr{P}(\mathbf{\Sigma}\sqcup\mathbf{\Sigma}')}{\mathscr{L}(\mathbf{g})} = \dilim{(\mathbf{f},\mathbf{f}')\in\mathscr{P}(\mathbf{\Sigma})\times\mathscr{P}(\mathbf{\Sigma}')}{\mathscr{L}(\mathbf{f}\sqcup\mathbf{f}')}, \\
&= \dilim{(\mathbf{f},\mathbf{f}')\in\mathscr{P}(\mathbf{\Sigma})\times\mathscr{P}(\mathbf{\Sigma}')}{\left(\mathscr{L}(\mathbf{f})\otimes\mathscr{L}(\mathbf{f}')\right)} = \left(\dilim{\mathbf{f}\in\mathscr{P}(\mathbf{\Sigma})}{\mathscr{L}(\mathbf{f})}\right)\otimes\left(\dilim{\mathbf{f}'\in\mathscr{P}(\mathbf{\Sigma}')}{\mathscr{L}(\mathbf{f}')}\right), \\
&= \mathscr{V}(\mathbf{\Sigma})\otimes\mathscr{V}(\mathbf{\Sigma}').
\end{split}
\end{displaymath}
The remaining axioms are verified in a similarly tautological fashion.
\end{proof}

\subsection{The category of framed 3-cobordisms and the topological quantum field theory}

The topological quantum field theory is a global object which incorporates theta functions, the Schr\"odinger representation of the finite Heisenberg group, the representation \eqref{eqn_hermitejacobiact}, and brings together surfaces of all genera.

\subsubsection{The category of framed 3-cobordisms} \label{sec_framedbord}

We introduce a finer category than the category of parameterized 3-cobordisms considered in Section \ref{sec_surgerycobo}. Firstly, because of our solution \eqref{eqn_masgrpact} to the issue of the projective anomaly in the representation \eqref{eqn_hermitejacobiact} of the mapping class group, the boundaries of our cobordisms should be extended surfaces and the cobordisms themselves should be framed by integers. Secondly, because theta functions and the Schr\"odinger representation are modeled by the skein theory of framed links, these cobordisms should contain oriented framed links in their interior.

\begin{definition}
A framed 3-cobordism
\[ \mathbf{M}:\partial_-\mathbf{M}\Rightarrow\partial_+\mathbf{M} \]
is a tuple $\mathbf{M}:=(M,\partial_-\mathbf{M},\partial_+\mathbf{M},L_M,n_M)$, where:
\begin{itemize}
\item
$M$ is a compact oriented 3-dimensional manifold;
\item
$\partial_-\mathbf{M}=(\partial_-M,\mathbf{L}_-)$ and $\partial_+ \mathbf{M}=(\partial_+ M,\mathbf{L}_+)$ are extended surfaces such that $\partial_-M$ and $\partial_+M$ are subsurfaces of $\partial M$, partitioning it such that $\partial_+M$ has the same orientation as $\partial M$ while $\partial_- M$ has the opposite one;
\item
$L_M$ is an oriented framed link embedded in the interior of $M$;
\item
$n_M$ is an integer, called the \emph{weight} of $\mathbf{M}$.
\end{itemize}

A homeomorphism of framed 3-cobordisms is an orientation preserving homeomorphism of the underlying 3-manifolds that preserves the structures described above. According to Definition \ref{def_extsurf}, this map need \emph{not} preserve the Lagrangian subspaces of course. Disjoint unions of framed 3-cobordisms are formed in the obvious way, by taking the disjoint union of the underlying 3-manifolds, with the weight of the union being defined as the sum of the weights of each individual piece.
\end{definition}

We may glue two framed 3-cobordisms
\[ \mathbf{M}:\partial_-\mathbf{M}\Rightarrow\partial_+\mathbf{M} \quad\text{and}\quad \mathbf{M}':\partial_-\mathbf{M}'\Rightarrow\partial_+\mathbf{M}' \]
along an extended homeomorphism $\mathbf{h}:\partial_+\mathbf{M}\to\partial_-\mathbf{M}'$. The composition
\[ (\mathbf{M}'':\partial_-\mathbf{M}''\Rightarrow\partial_+\mathbf{M}'') := (\mathbf{M}':\partial_-\mathbf{M}'\Rightarrow\partial_+\mathbf{M}') \circ_{\mathbf{h}} (\mathbf{M}:\partial_-\mathbf{M}\Rightarrow\partial_+\mathbf{M}) \]
is formed in the obvious way, by gluing the underlying 3-manifolds together along the homeomorphism $h$ and taking $L_{M''}$ to be the union of the links $L_{M'}$ and $L_M$. The only part that really requires any explanation is how the weight $n_{M''}$ of $\mathbf{M}''$ is defined. Consider the following Lagrangian subspaces:
\begin{itemize}
\item
the subspace $N_M(\mathbf{L}_-)$ of $H_1(\partial_+M,\mathbb{R})$, which consists of all those elements $y$ for which there is an $x\in\mathbf{L}_-$ such that $x$ and $y$ are homologous in $H_1(M,\mathbb{R})$;
\item
the subspace $N^{M'}(\mathbf{L}'_+)$ of $H_1(\partial_-M',\mathbb{R})$, which consists of all those elements $y$ for which there is an $x\in\mathbf{L}'_+$ such that $x$ and $y$ are homologous in $H_1(M',\mathbb{R})$.
\end{itemize}
Then the weight of the composition is defined by
\[ n_{M''}:= n_M + n_{M'} + n_h + \tau\left(h_*(N_M(\mathbf{L}_-)),h_*(\mathbf{L}_+),N^{M'}(\mathbf{L}'_+)\right) +\tau\left(h_*(\mathbf{L}_+),\mathbf{L}'_-,N^{M'}(\mathbf{L}'_+)\right). \]
As is well-known, the cocycle identity for the Maslov index $\tau$ implies that gluing cobordisms is associative. As is also commonly understood, this composition rule is motivated by Wall's formula for the nonadditivity of the signature, as we will see in the proof of Theorem \ref{thm_tqft}.

\subsubsection{The topological quantum field theory}

We now bring the theory of theta functions under the unified framework of topological quantum field theory. The  main result is the \emph{existence} and \emph{uniqueness} of a topological quantum field theory incorporating theta functions. One of the points we wish to emphasize is that this TQFT is essentially fixed by the representation \eqref{eqn_hermitejacobiact} of the mapping class group. This is in turn fixed by the exact Egorov identity \eqref{eqn_egorov} and the principles of quantization.

The construction is in the spirit of \cite{bhmv}. We begin with the definition of a TQFT, following closely the axioms of Turaev \cite{turaev}.

\begin{definition} \label{def_tqft}
A Topological Quantum Field Theory with anomaly $\anomaly\in\mathbb{C}^{\times}$ is a pair $(\mathscr{V},Z)$, where $\mathscr{V}$ is any functor from the category of extended surfaces to complex vector spaces satisfying the axioms of Proposition \ref{prop_funcstates}, and $Z$ is a mapping which assigns to every framed 3-cobordism $\mathbf{M}:\partial_-\mathbf{M}\Rightarrow\partial_+\mathbf{M}$, a $\mathbb{C}$-linear map
\[ Z(\mathbf{M}):\mathscr{V}(\partial_-\mathbf{M})\to\mathscr{V}(\partial_+\mathbf{M}) \]
satisfying the following axioms:
\begin{enumerate}

\item \label{axiom_natural}
If
\[ \mathbf{M}:\partial_-\mathbf{M}\Rightarrow\partial_+\mathbf{M} \quad\text{and}\quad \mathbf{M}':\partial_-\mathbf{M}'\Rightarrow\partial_+\mathbf{M}' \]
are two framed 3-cobordisms and $f:\mathbf{M}\to\mathbf{M}'$ is a homeomorphism between these framed 3-cobordisms (which implies that they have the same weight), then the diagram
\[ \xymatrix{ \mathscr{V}(\partial_-\mathbf{M}) \ar^{Z(\mathbf{M})}[r] \ar[d]_{\mathscr{V}(\mathbf{f}_{|\partial_-\mathbf{M}})} & \mathscr{V}(\partial_+\mathbf{M}) \ar[d]^{\mathscr{V}(\mathbf{f}_{|\partial_+\mathbf{M}})} \\ \mathscr{V}(\partial_-\mathbf{M}') \ar^{Z(\mathbf{M}')}[r] & \mathscr{V}(\partial_+\mathbf{M}') } \]
commutes; where
\begin{displaymath}
\begin{split}
\mathbf{f}_{|\partial_-\mathbf{M}} &:= \left(f_{|\partial_-M},\tau\left(N^{M'}(\mathbf{L}'_+),\mathbf{L}'_-,f_{|\partial_- M}^*(\mathbf{L}_-)\right)\right), \\
\mathbf{f}_{|\partial_+\mathbf{M}} &:= \left(f_{|\partial_+M},\tau\left(f_{|\partial_+ M}^*N_M(\mathbf{L}_-),\mathbf{L}'_+,f_{|\partial_+ M}^*(\mathbf{L}_+)\right)\right).
\end{split}
\end{displaymath}
Here $N_M$ and $N^{M'}$ have the meaning assigned to them above in Section \ref{sec_framedbord}.

\item \label{axiom_cobunion}
If $\mathbf{M}$ and $\mathbf{M}'$ are two framed 3-cobordisms, then the following diagram commutes:
\[ \xymatrix{ \mathscr{V}(\partial_-\mathbf{M}\sqcup\partial_-\mathbf{M}') \ar[rrr]^{Z(\mathbf{M}\sqcup\mathbf{M}')} \ar@{=}[d] &&& \mathscr{V}(\partial_+\mathbf{M}\sqcup\partial_+\mathbf{M}') \ar@{=}[d] \\ \mathscr{V}(\partial_-\mathbf{M})\otimes\mathscr{V}(\partial_-\mathbf{M}') \ar[rrr]^{Z(\mathbf{M})\otimes Z(\mathbf{M}')} &&& \mathscr{V}(\partial_+\mathbf{M})\otimes\mathscr{V}(\partial_+\mathbf{M}') } \]

\item \label{axiom_gluing}
If $\mathbf{M}$ and $\mathbf{M}'$ are two framed 3-cobordisms and $\mathbf{h}:\partial_+\mathbf{M}\to\partial_-\mathbf{M}'$ is an extended homeomorphism, then the map assigned to the cobordism $\mathbf{M}''$ that is obtained by gluing the cobordism $\mathbf{M}$ to the cobordism $\mathbf{M}'$ along the homeomorphism $\mathbf{h}$ is given by
\[ Z(\mathbf{M}'')=Z(\mathbf{M}')\circ\mathscr{V}(\mathbf{h})\circ Z(\mathbf{M}). \]

\item \label{axiom_norm}
If $\mathbf{\Sigma}$ is an extended surface, we may consider the cylinder 3-cobordism
\[ \mathbf{C}[\mathbf{\Sigma},n]:=\left(\Sigma\times [0,1],(\Sigma\times\{0\},\mathbf{L}_{\Sigma}),(\Sigma\times\{1\},\mathbf{L}_{\Sigma}),\emptyset,n\right) \]
of weight $n$, where $\Sigma\times [0,1]$ is given the product orientation. The following condition must be satisfied:
\[ Z(\mathbf{C}[\mathbf{\Sigma},n]) = \anomaly^n\cdot\id_{\mathscr{V}(\mathbf{\Sigma})}. \]
\end{enumerate}
\end{definition}

Now we may formulate the main result of the paper.

\begin{theorem} \label{thm_tqft}
There exists a unique (up to isomorphism) Topological Quantum Field Theory $(\mathscr{V},Z)$ satisfying the following conditions:
\begin{enumerate}

\item \label{con_theta}
$\mathscr{V}$ assigns the space of theta functions $\thetafn(\surf)\cong\rlskein{\hbdy}$ to the standard extended surface $\esurf:=(\surf,\mathbf{L}_g)$:
\[ \mathscr{V}(\esurf) = \rlskein{\hbdy}. \]

\item \label{con_mcgact}
The representation
\begin{displaymath}
\begin{array}{ccc}
\Aut(\esurf) & \to & \End\left(\rlskein{\hbdy}\right), \\
\mathbf{h} & \mapsto & \mathscr{V}(\mathbf{h});
\end{array}
\end{displaymath}
of the group of automorphisms of a standard extended surface on the space of states coincides with the representation defined by \eqref{eqn_masgrpact}:
\[ \mathscr{V}(\mathbf{h}) = \mathcal{F}(\mathbf{h}), \quad\text{for all }\mathbf{h}\in\Aut(\esurf). \]

\item \label{con_hbody}
Given a link $L$ inside the standard handlebody $\hbdy$, consider the framed 3-cobordism
\[ \mathbf{H}_g[L]:=\left(\hbdy,\mathbf{\Phi},\esurf,L,0\right). \]
We require that
\[ Z(\mathbf{H}_g[L]) = \skn{L}\in\rlskein{\hbdy} = \mathscr{V}(\esurf). \]

\item \label{con_linknum}
Given a link $L$ inside $S^3$, consider the closed framed 3-cobordism
\[ \mathbf{S}^3[L]:=\left(S^3,\mathbf{\Phi},\mathbf{\Phi},L,0\right). \]
We require that there exists a constant $\kappa\in\mathbb{C}$ such that for all links $L$ in $S^3$,
\[ Z\left(\mathbf{S}^3[L]\right) = \kappa \skn{L}\in\rlskein{S^3}=\mathbb{C}. \]
\end{enumerate}
The value of  $\kappa$ is uniquely determined by the above conditions; it is given by $\kappa=N^{-\frac{1}{2}}$. The anomaly of this TQFT, also completely determined by the above, is given by $\anomaly=e^{-\frac{\pi i}{4}}$.
\end{theorem}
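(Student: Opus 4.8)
The plan is to prove existence and uniqueness separately, though the two arguments share a common engine: reduce everything to skein computations in $S^3$ via surgery presentations, and use the nondegeneracy of the bilinear pairing \eqref{eqn_skeinform} together with the Maslov-index bookkeeping already built into the category of framed 3-cobordisms.

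\medskip

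\noindent\textbf{Existence.} First I would construct $(\mathscr{V},Z)$ explicitly. Take $\mathscr{V}$ to be the functor of states from Definition \ref{def_states}, which already satisfies the axioms of Proposition \ref{prop_funcstates} with anomaly $a=e^{-\pi i/4}$, and which assigns $\rlskein{\hbdy}$ to $\esurf$ with automorphisms acting via $\mathcal{F}$ by Theorem \ref{thm_masgrpact}; this hands us conditions \eqref{con_theta} and \eqref{con_mcgact} for free. To define $Z(\mathbf{M})$ for a connected framed 3-cobordism $\mathbf{M}:\partial_-\mathbf{M}\Rightarrow\partial_+\mathbf{M}$, choose a surgery presentation of the underlying parameterized cobordism as in Section \ref{sec_surgerycobo} with surgery link $L$, let $L_M$ be the internal link, form the link $L_*$ in $S^3$ by deleting the ribbon graphs (as in Figure \ref{fig_surgmapdiag}), and set
\[ Z(\mathbf{M}) := \kappa^{\,b}\, e^{-\frac{\pi i}{4}(n_M+\sigma_*(L))}\,\Omega(L)\cup\skn{L_M}, \]
suitably interpreted as a map $\mathscr{V}(\partial_-\mathbf{M})\to\mathscr{V}(\partial_+\mathbf{M})$ using the isomorphisms $\mathscr{V}(\partial_\pm\mathbf{M})\cong\rlskein{\hbdy[\,\cdot\,]}$ coming from chosen parameterizations, with $b$ an appropriate count of $S^3$-components and $\kappa=N^{-1/2}$ the normalization forced by condition \eqref{con_linknum}. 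One then checks: (i) well-definedness, i.e. independence of the surgery presentation, which follows exactly as in the proof of Theorem \ref{thm_masgrpact} from Kirby calculus (Remark \ref{rem_kirby}), the behavior of $\Omega$ under the Kirby moves, and Gauss-sum identities, now in the presence of the extra link $L_M$ which is inert under these moves; (ii) independence of the chosen parameterizations of the boundary, which is where the weights $n_h$ and the Maslov correction terms in the definition of $\mathscr{V}(\mathbf{h})$ are designed to cancel; (iii) naturality, axiom \eqref{axiom_natural}; (iv) monoidality, axiom \eqref{axiom_cobunion}; (v) the gluing axiom \eqref{axiom_gluing}; and (vi) the normalization axiom \eqref{axiom_norm}. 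Conditions \eqref{con_hbody} and \eqref{con_linknum} are immediate from the formula by taking the standard surgery presentations of $\hbdy$ (empty surgery link, one ribbon graph) and of $S^3[L]$ (no ribbon graphs), the latter pinning down $\kappa=N^{-1/2}$ once one computes $Z$ of the closed cobordism obtained by capping $\mathbf{H}_g[\emptyset]$ with its mirror and comparing with $\langle-,-\rangle$ via \eqref{eqn_dualbasis}.

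\medskip

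\noindent\textbf{The gluing axiom is the main obstacle}, and it is the technical heart of the argument. Given $\mathbf{M}$ and $\mathbf{M}'$ glued along $\mathbf{h}:\partial_+\mathbf{M}\to\partial_-\mathbf{M}'$, I would use Theorem \ref{thm_turaevglue} to produce a surgery presentation $L''=L'\circ L$ for $\mathbf{M}''$, observe $\Omega(L'')=\Omega(L')\cdot\Omega(L)$ with the extra small linking circles contributing the factors of $\kappa$ and $\Omega$ that realize the sum over the intermediate skein module $\rlskein{\partial_+\hbdy}$ — this is precisely the insertion of a "complete set of states" $\sum a_1^{\mu_1}\cdots$ against its $\langle-,-\rangle$-dual basis \eqref{eqn_dualbasis} — and then compute $\sigma_*(L'')$ by two applications of Wall's nonadditivity formula \cite{wall}, exactly as in the proof of Theorem \ref{thm_masgrpact} but now with general Lagrangian data $N_M(\mathbf{L}_-)$, $N^{M'}(\mathbf{L}'_+)$ on the gluing surface. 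The resulting Maslov defect must match, term for term, the two $\tau$-summands in the definition of $n_{M''}$ from Section \ref{sec_framedbord} together with the weight $n_h$, so that the phases $e^{-\pi i/4(\cdots)}$ combine correctly; verifying this identity of Maslov indices, using the cocycle relation for $\tau$ (Proposition 1.5.8 of \cite{lionvergne}) and Remark \ref{rem_param} to choose convenient parameterizations, is the one genuinely delicate bookkeeping step.

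\medskip

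\noindent\textbf{Uniqueness.} Suppose $(\mathscr{V},Z)$ is any TQFT satisfying \eqref{con_theta}--\eqref{con_linknum}. Every connected framed 3-cobordism decomposes, up to homeomorphism, as a composite of: a "bottom" handlebody cobordism $\mathbf{H}_{g_i}[\,\cdot\,]$ (whose $Z$-value is fixed by \eqref{con_hbody}), a mapping cylinder of an extended homeomorphism (whose $Z$-value is fixed by \eqref{con_mcgact} and functoriality of $\mathscr{V}$, which in turn is forced on $\mathscr{V}$ up to canonical isomorphism by \eqref{con_theta}, \eqref{con_mcgact} and the limit construction of Definition \ref{def_states}), surgery on an internal framed link realized by a closed surgery cobordism — and here \eqref{con_linknum} fixes the normalization — and a "top" handlebody cobordism read backwards. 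Concretely: present $\mathbf{M}$ by surgery as in Section \ref{sec_surgerycobo}, write it as $\mathbf{H}_{\text{top}}^{\vee}\circ(\text{surgery on }L)\circ\mathbf{H}_{\text{bottom}}$ appropriately, apply the gluing axiom \eqref{axiom_gluing} repeatedly, and use axioms \eqref{axiom_natural}, \eqref{axiom_cobunion}, \eqref{axiom_norm} to see that $Z(\mathbf{M})$ is completely determined by the data in conditions \eqref{con_theta}--\eqref{con_linknum}; comparing with the existence formula then shows $\kappa=N^{-1/2}$ and $a=e^{-\pi i/4}$ are forced, and that any two such theories are isomorphic via the identification maps $\iota_{\mathbf{f}}$ of Definition \ref{def_states}. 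The only subtlety is checking that $\mathscr{V}$ itself is pinned down: condition \eqref{con_theta} fixes it on standard surfaces, condition \eqref{con_mcgact} fixes the action of $\Aut(\esurf)$, disjoint-union axiom (1) of Proposition \ref{prop_funcstates} fixes it on disjoint unions of standard surfaces, and then naturality forces the value on an arbitrary extended surface to be the colimit over parameterizations — i.e. exactly $\mathscr{V}$ of Definition \ref{def_states} — so no freedom remains.
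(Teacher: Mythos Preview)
Your existence argument is essentially the paper's: define $Z$ via a surgery presentation, decorate the surgery link by $\Omega$, carry along the embedded link $L_M$, and normalize by $e^{-\frac{\pi i}{4}}$ raised to a signature plus weight. The paper is slightly more careful in that it first defines a bilinear form $B_{M,L}(a_-,a_+)$ in $\rlskein{S^3}$ and then converts it to an operator via the nondegenerate pairing \eqref{eqn_skeinform}, rather than writing ``$\Omega(L)\cup\skn{L_M}$ suitably interpreted as a map''; but the content is the same, and your gluing sketch (Theorem \ref{thm_turaevglue}, sum over intermediate basis realized as extra $\Omega$-circles, Wall's formula for the signature defect) matches the paper's computation.

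Your uniqueness argument, however, diverges from the paper's and has a soft spot. You propose to decompose an \emph{arbitrary} cobordism as $\mathbf{H}_{\text{top}}^{\vee}\circ(\text{surgery})\circ\mathbf{H}_{\text{bottom}}$ and claim each piece is pinned down by conditions \eqref{con_theta}--\eqref{con_linknum}. But condition \eqref{con_hbody} only fixes $Z$ on handlebodies $\emptyset\Rightarrow\esurf$, not on their reversals $\esurf\Rightarrow\emptyset$; and ``surgery on an internal framed link realized by a closed surgery cobordism'' is not a well-formed piece of the composition (a closed cobordism cannot sit between two nonempty boundary pieces). The paper avoids this by first invoking the standard fact (Turaev, Theorem III.3.3) that two TQFTs with isomorphic functors of states agree once their \emph{closed} invariants agree, and then computing the closed invariant directly: write a closed $M$ as the complement $\mathbf{S\setminus T}$ of the surgery tori glued (with the meridian--longitude swap $\varphi$) to a disjoint union of solid tori $\mathbf{H}$. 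One never needs $Z'(\mathbf{S\setminus T})$ in isolation; after expanding $\mathcal{F}(\varphi,0)\cdot\skn{\emptyset}$ as $N^{-1/2}\sum_j a^j$, each term is $Z'(\mathbf{S\setminus T})$ glued to solid tori containing $a^{j_i}$ via the \emph{untwisted} maps $f_i$, and that composite is exactly $Z'(\mathbf{S}^3[L\cup L_1^{\|j_1}\cup\cdots])$, determined by condition \eqref{con_linknum}. This is the trick your outline is missing. Finally, the paper pins down $\kappa=N^{-1/2}$ not by ``capping $\mathbf{H}_g[\emptyset]$ with its mirror'' but by the clean identity $Z'(S^2\times S^1)=\Dim\mathscr{V}'(S^2)=1$ together with the surgery description of $S^2\times S^1$ as zero-framed unknot surgery, which forces $\kappa\sqrt{N}=1$.
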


\begin{remark}
Condition \eqref{con_theta} states the obvious requirement that the space of states for our theory be the space of theta functions. Condition \eqref{con_mcgact} expresses the most significant requirement, that the representation of the mapping class group that is associated to the theory must be the standard action of the mapping class group on theta functions. This action is determined (up to a constant) by the principles of quantization. Conditions \eqref{con_hbody} and \eqref{con_linknum} are natural requirements on the cobordism theory laid out in Section \ref{sec_framedbord}. They state that this picture of cobordisms with embedded links must be consistent with the manner in which we modeled the space of theta functions in Section \ref{sec_skeinthy} using skein theory.

Condition \eqref{con_hbody} also contains the information about the Schr\"odinger representation \eqref{eqn_schrorep}, as modeled using skein theory by Theorem \ref{thm_opskein} and Theorem \ref{thm_thetaskein}. Consider a framed oriented link $L$ embedded in the cylinder $\surfcyl$ and the corresponding cobordism
\[ \left(\surfcyl,(\surf\times\{0\},\mathbf{L}_g),(\surf\times\{1\},\mathbf{L}_g),L,0\right) \]
formed by this cylinder with this embedded link. Condition \eqref{con_hbody}, together with Axiom \eqref{axiom_gluing} of Definition \ref{def_tqft}, ensures that the map assigned by the TQFT to this cobordism coincides with the action of the skein $\skn{L}$ in $\rlskein{\surfcyl}$ on $\rlskein{\hbdy}$. In this way, using theorems \ref{thm_opskein} and \ref{thm_thetaskein}, we see that the TQFT incorporates the action of the finite Heisenberg group $\heis(\mathbb{Z}_N^g)$ on the space of theta functions $\thetafn(\surf)$ through the maps assigned to these cobordisms.
\end{remark}

\begin{proof}
We divide the proof of the theorem into three parts. We first construct a preliminary TQFT on the parameterized 3-cobordisms of Section \ref{sec_surgerycobo} with embedded links. Next, we show that a TQFT on framed 3-cobordisms exists satisfying all the requirements of Definition \ref{def_tqft} and the conditions above. We then explain how this description of the TQFT is forced upon us by conditions (1)--(4) of the theorem.

\subsubsection*{Construction of the TQFT: parameterized 3-cobordisms}

Let $M:\partial_-M\Rightarrow\partial_+M$ be a parameterized 3-cobordism with an embedded oriented framed link $L$. We define a map
\begin{equation} \label{eqn_zee}
Z(M,L):\left(\bigotimes_{i=1}^{k_-}\rlskein{\hbdy[g_i^-]}\right)\to\left(\bigotimes_{i=1}^{k_+}\rlskein{\hbdy[g_i^+]}\right),
\end{equation}
where the $g^{\pm}_i$ are the genera of the parameterizing surfaces given by \eqref{eqn_param}. We consider first the case of a connected cobordism. Present the parameterized 3-cobordism by a surgery diagram as in Section \ref{sec_surgerycobo}. Denote the surgery link of this diagram by $L_{\mathrm{surg}}$. Next, add the (nonsurgery) link $L$ embedded in $M$ to the surgery diagram. Consider the complement $C\subset S^3$ of a regular neighborhood of the ribbon graphs of the surgery diagram. We may define the skein
\[ \skn{L\cup \Omega(L_{\mathrm{surg}})}\in\rlskein{C}. \]
An example is shown in Figure~\ref{fig_skeinofcobo}.
\begin{figure}[h]
\centerline{\psfig{file=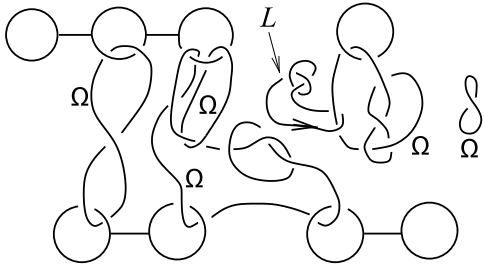,width=2.5in}}
\caption{The skein in $C$ associated to a parameterized framed 3-cobordism with embedded link $L$.}
\label{fig_skeinofcobo}
\end{figure}

This skein defines a bilinear map
\[ B_{M,L}:\left(\bigotimes_{i=1}^{k_-}\rlskein{\hbdy[g_i^-]}\right)\otimes\left(\bigotimes_{i=1}^{k_+}\rlskein{\hbdy[g_i^+]}\right) \to \rlskein{S^3}=\mathbb{C} \]
as follows. Embed the handlebodies $\hbdy[g_i^{\pm}]$ inside regular neighborhoods of the ribbon graphs of the surgery diagram using translations, with the handlebodies $\hbdy[g^-_i]$ mapping to the bottom of the surgery diagram and the handlebodies $\hbdy[g^+_i]$ mapping to the top. These embeddings provide a way to map the corresponding skeins into the complement of $C$, hence we may define $B_{M,L}$ by
\begin{equation} \label{eqn_invform}
B_{M,L}(a_-,a_+):= N^{-\frac{1}{2}}e^{-\frac{\pi i}{4}\sigma(L_{\mathrm{surg}})}\skn{a_-\cup L\cup \Omega(L_{\mathrm{surg}})\cup a_+}\in\rlskein{S^3}.
\end{equation}
The usual Kirby calculus arguments (cf. Remark \ref{rem_kirby}) and some simple calculations show that \eqref{eqn_invform} does not depend on the choice of surgery presentation for the parameterized 3-cobordism.

Let $\innprod$ denote the symmetric nondegenerate bilinear form on $\bigotimes_{i=1}^{k_+} \rlskein{\hbdy[g^+_i]}$ that is formed by taking the tensor product of the forms \eqref{eqn_skeinform} defined on each factor $\rlskein{\hbdy[g^+_i]}$. We may define $Z(M,L)$ by
\[ \langle Z(M,L)[a_-],a_+ \rangle = N^{\frac{k_+}{2}} B_{M,L}(a_-,a_+); \quad a_{\pm}\in\otimes_{i=1}^{k_{\pm}}\rlskein{\hbdy[g_i^{\pm}]}. \]

If $M$ is the disjoint union of connected parameterized 3-cobordisms, we just define \eqref{eqn_zee} by taking the tensor product of the maps defined for each connected component.

Given a homeomorphism $h:\surf\to\surf$, define the mapping cylinder $\mapcyl{h}$ of $h$ to be the parameterized 3-cobordism $\surfcyl$, where the top surface $\surf\times\{1\}$ is parameterized by the identity and the bottom surface $\surf\times\{0\}$ is parameterized by $h$. As explained in the course of the proof of Theorem \ref{thm_masgrpact}, we have
\[ B_{\mapcyl{h},\emptyset}(a_-,a_+) = N^{-\frac{1}{2}}\langle \mathcal{F}(h,0)[a_-],a_+\rangle; \quad a_{\pm}\in\rlskein{\hbdy}. \]
Consequently,
\begin{equation} \label{eqn_mcylact}
Z(\mapcyl{h},\emptyset)=\mathcal{F}(h,0).
\end{equation}

Let $M:\partial_- M \to \partial_+ M$ and $M':\partial_- M' \to \partial_+ M'$ be composable parameterized 3-cobordisms with embedded links $L$, $L'$ and let $M''$ be their composition \eqref{eqn_composition}. We claim that
\begin{equation} \label{eqn_assoc}
Z(M'',L\cup L') = e^{\frac{\tau\pi i}{4}} Z(M',L') Z(M,L),
\end{equation}
where
\[ \tau:=\tau\left((f'_-f_+^{-1})_*N_M(f_-)_*\left(\oplus_i\mathbf{L}_{g^-_i}\right),(f'_-)_*\left(\oplus_i\mathbf{L}_{g'^{-}_i}\right),N^{M'}(f'_+)_*\left(\oplus_i\mathbf{L}_{g'^+_i}\right)\right) \]
and $f_{\pm}$, $f'_{\pm}$ are the parameterizations \eqref{eqn_param} of the boundary surfaces.

By a standard argument that involves replacing disjoint unions with connected sums (see for instance Section IV.2.8 of \cite{turaev}), or by simply composing one component at a time, we may assume that our cobordisms are connected. Let us choose bases
\begin{displaymath}
\begin{array}{rl}
a_1^{k_1} a_2^{k_2} \cdots a_m^{k_m} \in \otimes_{i=1}^{k_-}\rlskein{\hbdy[g_i^-]}; & \quad k_1,k_2,\ldots,k_m\in\mathbb{Z}_N; \\
a_1^{j_1} a_2^{j_2} \cdots a_n^{j_n} \in \otimes_{i=1}^{k_+}\rlskein{\hbdy[g_i^+]}; & \quad j_1,j_2,\ldots,j_n\in\mathbb{Z}_N; \\
a_1^{l_1} a_2^{l_2} \cdots a_p^{l_p} \in \otimes_{i=1}^{k'_+}\rlskein{\hbdy[{g'_i}^+]}; & \quad l_1,l_2,\ldots,l_m\in\mathbb{Z}_N; \\
\end{array}
\end{displaymath}
for our skein modules consisting of curves $a_i$ of the form shown in Figure \ref{fig_surface}, where $m$, $n$ and $p$ denote the sum of the genera of the components of $\partial_- M$, $\partial_+ M$ and $\partial_+ M'$ respectively. By Equation \eqref{eqn_dualbasis}, proving the identity \eqref{eqn_assoc} is equivalent to showing that
\begin{multline} \label{eqn_assocsum}
e^{-\frac{\tau\pi i}{4}}N^{\frac{n}{2}}\left\langle Z(M'',L'\cup L)[a_1^{k_1}\cdots a_m^{k_m}],a_1^{l_1}\cdots a_p^{l_p} \right\rangle = \\
\sum_{j_1,\ldots,j_n} \left\langle Z(M',L')[a_1^{j_1}\cdots a_n^{j_n}],a_1^{l_1}\cdots a_p^{l_p} \right\rangle \left\langle Z(M,L)[a_1^{k_1}\cdots a_m^{k_m}],a_1^{j_1}\cdots a_n^{j_n} \right\rangle.
\end{multline}

Present the cobordisms $M$ and $M'$ by surgery diagrams $(L_{\mathrm{surg}},\Gamma_-,\Gamma_+)$ and $(L'_{\mathrm{surg}},\Gamma'_-,\Gamma'_+)$ respectively. Recall from Theorem \ref{thm_turaevglue} that the parameterized 3-cobordism $M''$ is presented by the surgery diagram $(L'_{\mathrm{surg}}\circ L_{\mathrm{surg}},\Gamma_-,\Gamma'_+)$. We begin by computing the right-hand side of \eqref{eqn_assocsum}. This computation, with the notation
\[ u:=(k_+ + k'_+ -2)/2, \quad \upsilon:=\sigma(L'_{\mathrm{surg}})+\sigma(L_{\mathrm{surg}}), \quad\text{and}\quad w:=u+n/2; \]
is outlined in Figure~\ref{fig_cobcomp}.
\begin{figure}[h]
\includegraphics{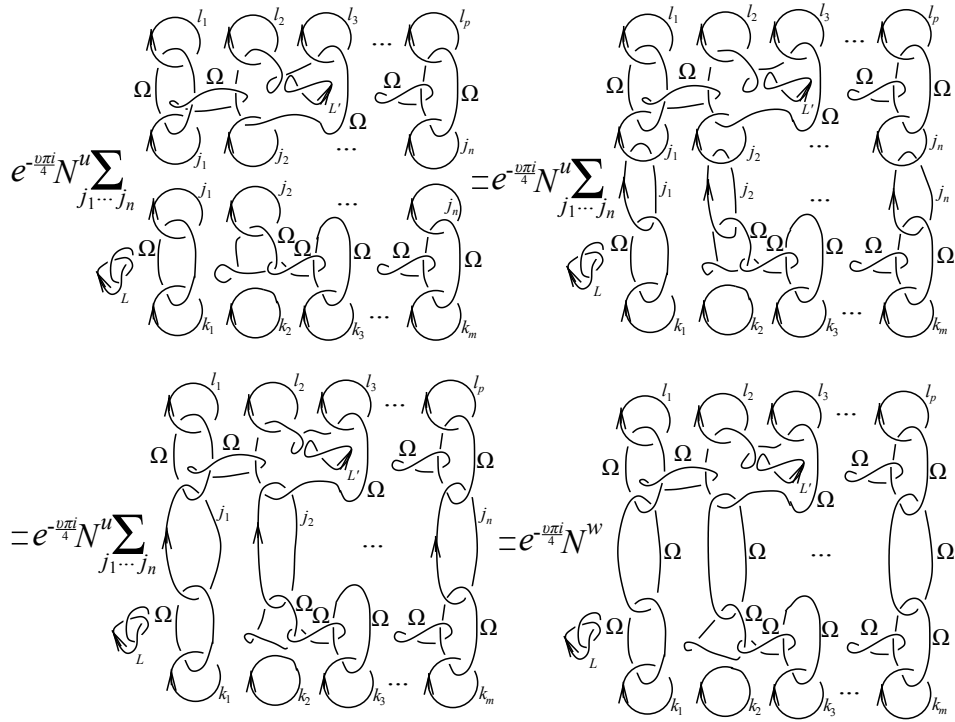}
\caption{Computation of right-hand side of \eqref{eqn_assocsum}.}
\label{fig_cobcomp}
\end{figure}

We are not yet done, as the surgery link in the final diagram of Figure~\ref{fig_cobcomp} is not quite $L'_{\mathrm{surg}}\circ L_{\mathrm{surg}}$. According to Theorem \ref{thm_turaevglue}, we must add $k_+-1$ circles around the groups of vertical lines in the middle of the diagram, and decorate them with the skein $\Omega$. In Figure~\ref{fig_addomega} we show that adding these circles amounts to introducing a factor of precisely $N^{(k_+-1)/2}$. This shows that the exponents of $N$ on both sides of \eqref{eqn_assocsum} agree, along with the skeins, so it only remains to check the powers of $e^{\frac{\pi i}{4}}$. This is verified using Wall's formula \cite{wall} for the signature and Remark \ref{rem_turaevglue}, which implies that
\[ \sigma(L'_{\mathrm{surg}}\circ L_{\mathrm{surg}}) + \tau = \sigma(L'_{\mathrm{surg}}) + \sigma(L_{\mathrm{surg}}). \]
This finishes the verification of Equation \eqref{eqn_assoc}.
\begin{figure}[h]
\centerline{\psfig{file=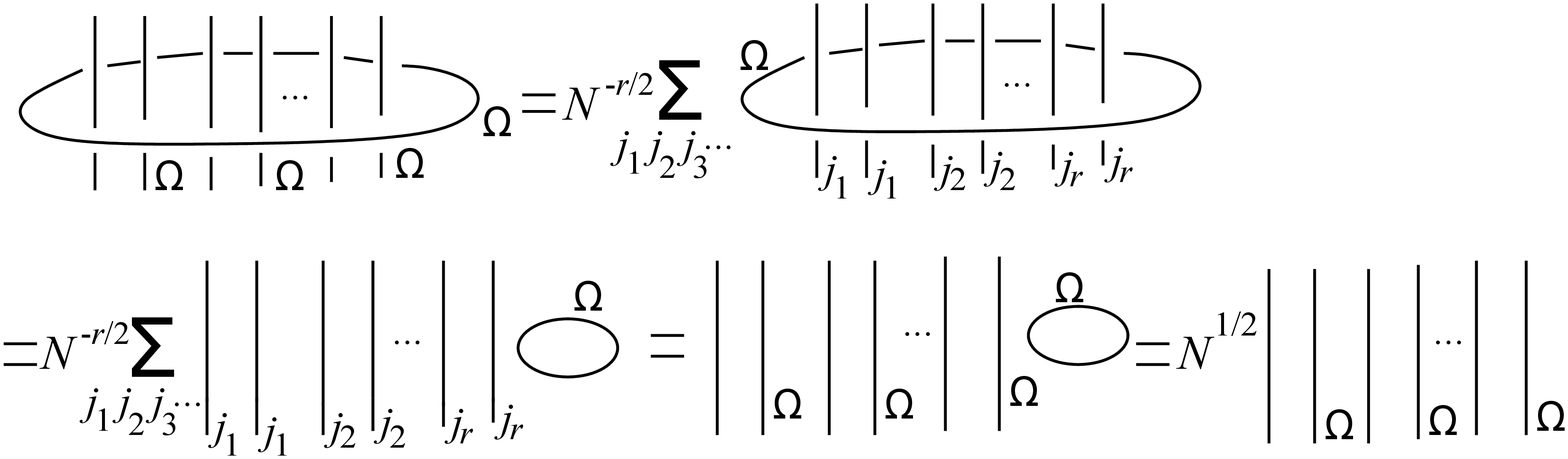,width=4.5in}}
\caption{Local computation in verifying Equation \eqref{eqn_assocsum}.}
\label{fig_addomega}
\end{figure}

\subsubsection*{Construction of the TQFT: framed 3-cobordisms}

We are now ready to define the TQFT whose existence is asserted by the theorem. Let $\mathscr{V}$ be the functor defined by Definition \ref{def_states}. By its construction, $\mathscr{V}$ clearly satisfies conditions \eqref{con_theta} and \eqref{con_mcgact} of the theorem, since we may canonically identify $\mathscr{V}(\esurf)$ with $\rlskein{\hbdy}$ using the identity parameterization of $\esurf$.

Let $\mathbf{M}:\partial_-\mathbf{M}\Rightarrow\partial_+\mathbf{M}$ be a framed 3-cobordism. Again, we discuss first the situation when $M$ is connected. To define a map
\begin{equation} \label{eqn_zeeframe}
Z(\mathbf{M}):\mathscr{V}(\partial_-\mathbf{M})\to\mathscr{V}(\partial_+\mathbf{M}),
\end{equation}
it is necessary, by the definition of $\mathscr{V}$ as a limit, to define a commuting system of maps
\[ Z_{\mathbf{f}_-,\mathbf{f}_+}(\mathbf{M}):\mathscr{L}(\mathbf{f}_-)\to\mathscr{L}(\mathbf{f}_+); \quad \mathbf{f}_{\pm}\in\mathscr{P}(\partial_{\pm}\mathbf{M}). \]
Let $\mathbf{f}_{\pm}$ be parameterizations of $\partial_{\pm}\mathbf{M}$. This gives $M$ the structure of a parameterized 3-cobordism $M_{f_-,f_+}$, and \eqref{eqn_zee} defines a map
\begin{equation} \label{eqn_zeeparam}
Z_{\mathbf{f}_-,\mathbf{f}_+}(\mathbf{M}):=e^{-\frac{n\pi i}{4}}Z(M_{f_-,f_+},L_M):\mathscr{L}(\mathbf{f}_-)\to\mathscr{L}(\mathbf{f}_+);
\end{equation}
where
\[ n:= n_M+n_{f_-}-n_{f_+}+\tau\left(f_-^*(\oplus_{i}\mathbf{L}_{g^-_i}),\mathbf{L}_-,N^M(\mathbf{L}_+)\right) + \tau\left(N_M(f_-^*(\oplus_{i}\mathbf{L}_{g^-_i})),\mathbf{L}_+,f_+^*(\oplus_{i}\mathbf{L}_{g^+_i})\right). \]
To check that \eqref{eqn_zeeparam} defines a commuting system of maps, suppose that we change the parameterizations $\mathbf{f}_{\pm}$ of $\partial_{\pm}\mathbf{M}$ by automorphisms $\mathbf{h}_{\pm}$ of the parameterizing surface. Then the parameterized 3-cobordism $M_{f_-h_-,f_+h_+}$ is obtained from the parameterized 3-cobordism $M_{f_-,f_+}$ by gluing the mapping cylinders $I_{h_-}$ and $I_{h_+^{-1}}$ to the bottom and top of $M_{f_-,f_+}$ respectively. Hence by Equation \eqref{eqn_mcylact} and Equation \eqref{eqn_assoc},
\begin{equation} \label{eqn_comsys}
Z_{\mathbf{f}_-\mathbf{h}_-,\mathbf{f}_+\mathbf{h}_+}(\mathbf{M}) = \mathcal{F}(\mathbf{h}_+^{-1})Z_{\mathbf{f}_-,\mathbf{f}_+}(\mathbf{M})\mathcal{F}(\mathbf{h}_-)
\end{equation}
up to a power of $e^{-\frac{\pi i}{4}}$. To check that the powers of $e^{-\frac{\pi i}{4}}$ on both sides of \eqref{eqn_comsys} actually agree is in fact very simple, since by Remark \ref{rem_param} we may assume that
\begin{equation} \label{eqn_lagpres}
f_{\pm}^*(\oplus_{i}\mathbf{L}_{g^{\pm}_i})=\mathbf{L}_{\pm}.
\end{equation}
With this assumption, half of the Maslov indices in the calculation become zero, and the remaining half are seen to cancel. This establishes Equation \eqref{eqn_comsys} and yields a well-defined map \eqref{eqn_zeeframe}. It is now simple to check that conditions \eqref{con_hbody} and \eqref{con_linknum} of the theorem are a straightforward consequence of the preceding definitions.

For a disconnected cobordism $\mathbf{M}$, we define $Z(\mathbf{M})$ as a tensor product of the maps \eqref{eqn_zeeframe} over the connected components. This makes sense as $\mathscr{V}$ satisfies Axiom \eqref{axiom_permute} of Proposition \ref{prop_funcstates}. This ensures Axiom \eqref{axiom_cobunion} of Definition \ref{def_tqft} holds.

The remaining axioms of Definition \ref{def_tqft} now follow easily from the facts that have just been established. Axiom \eqref{axiom_gluing} follows from Equation \eqref{eqn_assoc}; to ensure the Maslov index calculation is trivial, choose parameterizations $\mathbf{f}_-$, $\mathbf{f}_+$ and $\mathbf{f}'_+$ satisfying \eqref{eqn_lagpres} and set $\mathbf{f}'_-:=\mathbf{h}\mathbf{f}_+$. To check Axiom \eqref{axiom_natural} is tautological, provided that we choose our parameterizations appropriately in the manner that has just been explained and which renders the Maslov index calculation trivial. The verification of Axiom \eqref{axiom_norm} is similarly tautological.

\subsubsection*{Proof of the uniqueness of the TQFT}

We now show how this description of the TQFT is forced upon us by conditions (1)--(4) of the theorem. Suppose that $(\mathscr{V}',Z')$ is another TQFT satisfying these conditions. Recall from Definition \ref{def_states} that $\mathscr{V}$ was defined as a limit. Hence, since $\mathscr{V}'$ satisfies conditions \eqref{con_theta} and \eqref{con_mcgact} of the theorem, we may define a natural equivalence between $\mathscr{V}$ and $\mathscr{V}'$ such that the following diagram commutes for all extended surfaces $\mathbf{\Sigma}$ and parameterizations $\mathbf{f}\in\mathscr{P}(\mathbf{\Sigma})$:
\[ \xymatrix{ \mathscr{V}(\mathbf{\Sigma}) \ar[r]^{\cong} & \mathscr{V}'(\mathbf{\Sigma}) \\ \mathscr{L}(\mathbf{f}) \ar[u]^{\iota_{\mathbf{f}}} \ar[ru]_{\mathscr{V}'(\mathbf{f})} } \]

Note that since $\mathscr{V}'$ satisfies condition \eqref{con_mcgact} we have
\[ a\cdot\id_{\mathscr{V}'(\esurf)} = \mathscr{V}'(\id_{\surf},1) = \mathcal{F}(\id_{\surf},1)=e^{-\frac{\pi i}{4}}\cdot\id_{\mathscr{V}'(\esurf)}; \]
hence the anomaly of our TQFT $(\mathscr{V}',Z')$ must also be $a=e^{-\frac{\pi i}{4}}$.

By a well-known and routine argument (cf. for instance Theorem III.3.3 of \cite{turaev}), to prove these two TQFTs are isomorphic, it suffices to check that they produce the same closed 3-manifold invariants. Hence, let
\[ \mathbf{M}:=\left(M,\mathbf{\Phi},\mathbf{\Phi},L,n\right) \]
be a closed connected framed 3-cobordism. The closed 3-manifold $M$ may be obtained by performing surgery on $S^3$ along a framed link
\[ L_{\mathrm{surg}} = L_1\cup\ldots\cup L_k, \]
where each $L_i$ is a closed framed curve in $S^3$. Now let
\[ f_i:\hbdy[1]\to S^3, \quad i=1,\ldots, k; \]
be the orientation preserving embeddings of standard solid tori into $S^3$ that are determined by these framed curves, and denote their images in $S^3$ by $T_i:=f_i(\hbdy[1])$.

Remove from $S^3$ the interiors of the tori $T_i$ and define the following Lagrangian,
\[ \mathbf{L}:=\ker\left[H_1\left(\cup_{i=1}^k \partial T_i,\mathbb{Z}\right) \to H_1\left(S^3-\Int(\cup_{i=1}^k T_i),\mathbb{Z}\right)\right]. \]
Consider the framed 3-cobordism
\[ (\mathbf{S\setminus T}):=\left(S^3-\Int(\cup_{i=1}^k T_i),(\cup_{i=1}^k \partial T_i,\mathbf{L}),\mathbf{\Phi},L,0\right) \]
which contains the framed link $L$ originating in $\mathbf{M}$. Let $\varphi$ be the homeomorphism of the standard torus that rotates the meridian and longitude;
\[ \varphi:\surf[1]\to\surf[1]; \qquad a\mapsto b^{-1}, \quad b\mapsto a; \]
where $a:=a_1$ and $b:=b_1$ are the curves on the torus shown in Figure \ref{fig_surface}. Then $\mathbf{M}$ is obtained by gluing the framed 3-cobordism $(\mathbf{S\setminus T})$ to the framed 3-cobordism
\[ \mathbf{H}:=\bigsqcup_{i=1}^k(\hbdy[1],\mathbf{\Phi},\esurf[1],\emptyset,0) \]
along the extended homeomorphism
\[ \left(\sqcup_{i=1}^k[{f_i}_{|\surf[1]}\circ\varphi],n\right) = \left(\sqcup_{i=1}^k {f_i}_{|\surf[1]},0\right)\circ (\id,n-\tau)\circ \left(\sqcup_{i=1}^k (\varphi,0)\right), \]
where $\tau:=\tau\left(\oplus_{i=1}^k\left[{f^*_i}_{|\surf[1]}(\mathbf{A})\right],\oplus_{i=1}^k\left[{f^*_i}_{|\surf[1]}(\mathbf{B})\right],\mathbf{L}\right)$ and $\mathbf{A}$ and $\mathbf{B}$ are the Lagrangians in $H_1(\surf[1],\mathbb{Z})$ generated by the curves $a$ and $b$ respectively.

A fairly routine calculation, which we do not reproduce, shows that
\begin{equation} \label{eqn_ftrans}
\mathcal{F}(\varphi,0)\cdot\skn{\emptyset} = N^{-\frac{1}{2}}\sum_{j\in\mathbb{Z}_N} a^j,
\end{equation}
where $\skn{\emptyset}\in\rlskein{\hbdy[1]}$ denotes the empty skein. We are now ready to calculate the invariant $Z'(\mathbf{M})$ and show that it is equal to $Z(\mathbf{M})$:
\begin{equation} \label{eqn_unique}
\begin{split}
Z'(\mathbf{M}) &= Z'(\mathbf{S\setminus T})\circ\mathscr{V}'\left(\sqcup_{i=1}^k[{f_i}_{|\surf[1]}\circ\varphi],n\right)\circ Z'(\mathbf{H}), \\
&= e^{-\frac{(n-\tau)\pi i}{4}}Z'(\mathbf{S\setminus T})\circ\mathscr{V}'\left(\sqcup_{i=1}^k{f_i}_{|\surf[1]},0\right)\circ \left(\otimes_{i=1}^k \left[\mathcal{F}(\varphi,0)Z'(\mathbf{H}_1[\emptyset])\right]\right), \\
&= N^{-\frac{k}{2}}e^{-\frac{(n-\tau)\pi i}{4}}Z'(\mathbf{S\setminus T})\circ\mathscr{V}'\left(\sqcup_{i=1}^k{f_i}_{|\surf[1]},0\right)\left[\left(\sum_{j\in\mathbb{Z}_N} a^j\right)^{\otimes k}\right], \\
&= N^{-\frac{k}{2}}e^{-\frac{(n-\tau)\pi i}{4}}\sum_{j_1,\ldots,j_k\in\mathbb{Z}_N}Z'(\mathbf{S\setminus T})\circ\mathscr{V}'\left(\sqcup_{i=1}^k{f_i}_{|\surf[1]},0\right)\circ Z'\left(\sqcup_{i=1}^k \mathbf{H}_1[a^{j_i}]\right), \\
&= N^{-\frac{k}{2}}e^{-\frac{(n-\tau)\pi i}{4}}\sum_{j_1,\ldots,j_k\in\mathbb{Z}_N}Z'\left(\mathbf{S}^3\left[L\cup L_1^{\| j_1}\cup\ldots\cup L_k^{\| j_k}\right]\right), \\
&= \kappa N^{-\frac{k}{2}}e^{-\frac{(n-\tau)\pi i}{4}}\sum_{j_1,\ldots,j_k\in\mathbb{Z}_N}\skn{L\cup L_1^{\| j_1}\cup\ldots\cup L_k^{\| j_k}}, \\
&= \kappa e^{-\frac{(n-\tau)\pi i}{4}}\skn{L\cup\Omega(L_{\mathrm{surg}})}.
\end{split}
\end{equation}
The first line follows from Axiom \eqref{axiom_gluing} of Definition \ref{def_tqft}. The second line follows from Axiom \eqref{axiom_cobunion} of Definition \ref{def_tqft} and condition \eqref{con_mcgact} of the theorem. The third line follows from Equation \eqref{eqn_ftrans} and condition \eqref{con_hbody}. On the fourth line we have used Axiom \eqref{axiom_cobunion} of Definition \ref{def_tqft} and condition \eqref{con_hbody} of the theorem. The fifth line follows from Axiom \eqref{axiom_gluing} of Definition \ref{def_tqft}. The sixth line follows from condition \eqref{con_linknum} of the theorem.

Consider the 4-manifold that is obtained by gluing 2-handles to the 4-ball along the framed curves $L_i$. The signature of this 4-manifold, which coincides with that of the link $L_{\mathrm{surg}}$, is given by Wall's formula \cite{wall} to be $\sigma(L_{\mathrm{surg}})=-\tau$.

It remains to determine the value of the constant $\kappa$. By a standard argument (cf. for instance Theorem III.2.1.3 of \cite{turaev}) that involves gluing two cylinders over the 2-sphere together, we must have
\begin{equation} \label{eqn_sphcyl}
Z'(S^2\times S^1,\mathbf{\Phi},\mathbf{\Phi},\emptyset,0) = \Dim(\mathscr{V}'(S^2)) = \Dim(\rlskein{\hbdy[0]}) = 1.
\end{equation}
However, $S^2\times S^1$ is obtained from $S^3$ by surgery on the trivial framed knot, so by the formula we arrived at in \eqref{eqn_unique}, \eqref{eqn_sphcyl} must be equal to $\kappa\sqrt{N}$. This shows that $\kappa=N^{-\frac{1}{2}}$ and hence that our two TQFTs have the same closed 3-manifold invariants. This concludes the proof of the theorem.
\end{proof}

\begin{remark}
A legitimate question is whether the preceding TQFT can be constructed from a modular tensor category using the results of Reshetikhin-Turaev \cite{reshetikhinturaev}, \cite{turaev}. Unfortunately, any modular category producing this TQFT must possess a somewhat pathological property. One can prove that if $\mathscr{C}$ is a modular tensor category that gives rise to a TQFT whose space of states may be identified with theta functions, as in condition \eqref{con_theta} of Theorem \ref{thm_tqft}, and if $N$ is the product of distinct primes, then there is a simple object $V$ of $\mathscr{C}$, which generates all the simple objects of $\mathscr{C}$, such that $V^{\otimes N}$ is isomorphic (by a morphism $f$ say) to the unit object $\mathbbm{1}$ of $\mathscr{C}$, and such that the following diagram commutes:
\[ \xymatrix{ V^{\otimes N}\otimes V \ar[rr]^{f\otimes\id}_{\cong} \ar@{=}[d] && \mathbbm{1}\otimes V \ar@{=}[r] & V \ar[d]_{-\id} \\ V\otimes V^{\otimes N} \ar[rr]^{\id\otimes f}_{\cong} && V\otimes\mathbbm{1} \ar@{=}[r] & V } \]

This implies that $\mathscr{C}$ cannot be embedded in the tensor category of vector spaces, at least with the usual associator for the tensor product. This does not rule out a modular tensor category based on vector spaces with more exotic formulae for the associators, such as those considered in \cite{kazwenzl} and \cite{sterling} in which one multiplies by a power of a root of unity that depends upon which irreducible representation one finds oneself in. But, even if one could find such a modular category, we feel these categories are somewhat unpleasant to manipulate, and find the prospect of working in them unappealing when more convenient descriptions are available.
\end{remark}

\end{document}